\theoremstyle{plain}
\newtheorem*{thmA}{Theorem~A}
\newtheorem*{acknowledgements}{Acknowledgements}
\newtheorem*{MT1}{Theorem~1}
\newtheorem*{MT2}{Theorem~2}
\newtheorem*{MT3}{Theorem~3}
\newtheorem*{proposiotion A}{Proposition~A}
\newtheorem*{lemma 6.1-(1)}{Lemma~6.1*}
\newtheorem{theorem}{Theorem}[section]
\newtheorem{lemma}[theorem]{Lemma}
\newtheorem{proposition}[theorem]{Proposition}
\newtheorem{remark}[theorem]{Remark}
\def \N{\nabla}
\def \x{\xi}
\def \x{\xi}
\theoremstyle{plain}
\begin{document}

\title[real hypersurface of type~$\rm B$]{Commuting Jacobi operators on Real hypersurfaces of Type~$\rm B$ in the complex quadric}

\vspace{0.2in}
\author[H. Lee and Y.J. Suh]{Hyunjin Lee and Young Jin Suh}

\address{\newline
Hyunjin Lee
\newline The Research Institute of Real and Complex Manifolds (RIRCM),
\newline Kyungpook National University,
\newline Daegu 41566, REPUBLIC OF KOREA}
\email{lhjibis@hanmail.net}

\address{\newline
Young Jin Suh
\newline Department of Mathematics \& RIRCM,
\newline Kyungpook National University,
\newline Daegu 41566, REPUBLIC OF KOREA}
\email{yjsuh@knu.ac.kr}

%\footnotetext[1]{{\it *}: Corresponding Author.}
\footnotetext[1]{{\it 2010 Mathematics Subject Classification}:
Primary 53C40; Secondary 53C55.}
\footnotetext[2]{{\it Key words}: commuting Jacobi operator, $\mathfrak A$-isotropic, $\mathfrak A$-principal, K\"{a}hler structure, complex conjugation, complex quadric.}

\begin{abstract}
In this paper, first, we investigate the commuting property between the normal Jacobi operator~${\bar R}_N$ and the structure Jacobi operator~$R_{\xi}$ for Hopf real hypersurfaces in the complex quadric~$Q^m = SO_{m+2}/SO_mSO_2$, $m \geq 3$, which is defined by ${\bar R}_N R_{\xi} = R_{\xi}{\bar R}_N$. Moreover, a new characterization of Hopf real hypersurfaces with $\mathfrak A$-principal singular normal vector field in the complex quadric~$Q^{m}$ is obtained. By virtue of this result, we can give a remarkable classification of Hopf real hypersurfaces in the complex quadric~$Q^{m}$ with commuting Jacobi operators.
\end{abstract}

\maketitle

\section{Introduction}\label{section 1}
\setcounter{equation}{0}
\renewcommand{\theequation}{1.\arabic{equation}}
\vspace{0.13in}

In the class of Hermitian symmetric spaces of rank~2, usually we can give examples of Riemannian symmetric spaces $G_{2}(\mathbb C^{m+2})= SU_{m+2}/S(U_2U_m)$ and $G_{2}^{*}(\mathbb C^{m+2})=SU_{2,m}/S(U_2U_m)$, which are said to be complex two-plane Grassmannians and complex hyperbolic two-plane Grassmannians, respectively (see \cite{BLS2013}, \cite{K2}, \cite{SH}, \cite{S} and \cite{S5}). These are viewed as Hermitian symmetric spaces and quaternionic K\"{a}hler symmetric spaces equipped with the K\"{a}hler structure $J$ and the quaternionic K\"{a}hler structure ${\mathfrak J}$. There are exactly two types of singular tangent vectors $X$ of complex $2$-plane Grassmannians $G_{2}(\mathbb C^{m+2})$ and complex hyperbolic $2$-plane Grassmannians
$G_{2}^{*}(\mathbb C^{m+2})$ which are characterized by the geometric properties $JX \in {\mathfrak J}X$ and $JX \perp {\mathfrak J}X$ respectively.

\vskip 6pt

As another kind of Hermitian symmetric space with rank $2$ of compact type different from the above ones, we can give the example of complex quadric $Q^m = SO_{m+2}/SO_mSO_2$, which is a complex hypersurface in complex projective space ${\mathbb C}P^{m+1}$ (see \cite{R}, \cite{R1}, \cite{R2},  \cite{BS} and \cite{S1}). The complex quadric also can be regarded as a kind of real Grassmann manifold of compact type with rank~2 (see \cite{He} and \cite{KO}). Accordingly, the complex quadric admits both a complex conjugation structure~$A$ and a K\"ahler structure~$J$, which anti-commutes with each other, that is, $AJ=-JA$. Then for $m \geq 3$ the triple $(Q^m,J,g)$ is a Hermitian symmetric space of compact type with rank~2 and its maximal sectional curvature is equal to~$4$ (see \cite{K} and \cite{R}).

\vskip 6pt

In addition to the complex structure~$J$ there is another distinguished geometric structure on~${Q}^m$, namely a parallel rank two vector bundle~${\mathfrak A}$ which contains an $S^1$-bundle of real structures, that is, complex conjugations~$A$ on the tangent spaces of $Q^m$. The set is denoted by ${\mathfrak A}_{[z]}=\{A_{{\lambda}\bar z} \, \vert \, {\lambda} \in S^1 \subset \mathbb C \}$, $[z] \in Q^m$, and it is the set of all complex conjugations defined on $Q^m$. Then ${\mathfrak A}_{[z]}$ becomes a parallel rank $2$-subbundle of $\mathrm{End}(TQ^m)$. This geometric structure determines a maximal ${\mathfrak A}$-invariant subbundle ${\mathcal Q}$ of the tangent bundle $TM$ of a real hypersurface $M$ in~${Q}^m$.  Here the notion of parallel vector bundle ${\mathfrak A}$ means that $({\bar\nabla}_XA)Y=q(X)JAY$ for any vector fields $X$ and $Y$ on $Q^m$, where  $\bar\nabla$ and $q$ denote a connection and a certain $1$-form defined on $T_{[z]}Q^m$, $[z] \in Q^m$, respectively (see \cite{BS}).

\vskip 6pt

Recall that a nonzero tangent vector $W \in T_{[z]}Q^m$ is called {\it singular} if it is tangent to more than one maximal flat in $Q^m$. There are two types of singular tangent vectors for the complex quadric $Q^m$:
\begin{enumerate}[\rm (a)]
\item {If there exists a conjugation $A \in {\mathfrak A}$ such that $W \in V(A):= \{W\,|\, AW=W \}$, then $W$ is singular. Such a singular tangent vector is called {\it ${\mathfrak A}$-principal}.}
\item {If there exist a conjugation $A \in {\mathfrak A}$ and orthonormal vectors $X,Y \in V(A)$ such that $W/||W|| = (X+JY)/\sqrt{2}$, then $W$ is singular. Such a singular tangent vector is called {\it ${\mathfrak A}$-isotropic}.}
\end{enumerate}

\vskip 3pt

On the other hand, a typical characterization for real hypersurfaces with the $\mathfrak A$-principal normal vector field in $Q^{m}$ was introduced in \cite{BS2015} as follows.
\begin{thmA}\label{Theorem A}
Let $M$ be a connected orientable real hypersurface with constant mean curvature in the complex quadric~$Q^m$, $m\geq 3$. Then $M$ is a contact hypersurface if and only if $M$ is congruent to an open part of the around the $m$-dimensional sphere~$S^{m}$ which is embedded in $Q^{m}$ as a real form of $Q^{m}$.
\end{thmA}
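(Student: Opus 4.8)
The plan is to prove the two implications separately, the ``only if'' direction being the substantial one. Throughout write $S$ for the shape operator of $M$, $N$ for a local unit normal field, $\xi=-JN$ for the Reeb (Hopf) vector field, $\eta=g(\,\cdot\,,\xi)$, and $\phi$ for the induced structure tensor, so that $JX=\phi X+\eta(X)N$ along $M$. Recall that ``$M$ is a contact hypersurface'' means $S\phi+\phi S=2\rho\,\phi$ for a nowhere-vanishing smooth function $\rho$ on $M$.

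For the ``if'' direction I would compute directly on the model. Let $M_r$ denote the tube of radius $r$ around the totally geodesic real form $S^m\subset Q^m$. Since $S^m$ is a real form, its normal spaces consist of $\mathfrak A$-principal tangent vectors of $Q^m$, so along any unit-speed geodesic $\gamma$ issuing perpendicularly from $S^m$ the normal Jacobi operator $\bar R(\,\cdot\,,\gamma')\gamma'$ of $Q^m$ is parallel and has a constant, explicitly computable eigenvalue decomposition. Solving the Riccati (Jacobi) equation for the shape operators of the parallel hypersurfaces then shows that, for an appropriate range of $r$, $M_r$ is Hopf, has constant principal curvatures, and satisfies $S\phi+\phi S=2\rho\,\phi$ with a nonzero constant $\rho=\rho(r)$; in particular $M_r$ is a contact hypersurface of constant mean curvature.

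For the ``only if'' direction, the first observation is that a contact hypersurface is automatically Hopf: applying $S\phi+\phi S=2\rho\,\phi$ to $\xi$ and using $\phi\xi=0$ gives $\phi(S\xi)=0$, and since $\ker\phi=\mathbb{R}\xi$ this forces $S\xi=\alpha\xi$ with $\alpha=g(S\xi,\xi)$. Next I would bring in the Codazzi equation of $M$ in $Q^m$,
\[
(\nabla_X S)Y-(\nabla_Y S)X=\big(\bar R(X,Y)N\big)^{\top},
\]
and substitute the explicit curvature tensor of $Q^m$; the right-hand side becomes an expression in $\eta$, $\phi$, the function $\alpha$, a conjugation $A\in\mathfrak A$, and --- crucially --- the vectors $AN$ and $A\xi=JAN$. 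The core of the argument is to show that the normal is $\mathfrak A$-principal, i.e. $AN=N$ for some conjugation $A\in\mathfrak A$. To this end I would use the pointwise normal form $N=\cos t\,Z_1+\sin t\,JZ_2$ with $Z_1,Z_2\in V(A)$ orthonormal and $t\in[0,\pi/4]$ --- so that $\mathfrak A$-principal means $t=0$ and $\mathfrak A$-isotropic means $t=\pi/4$ --- and play the contact relation and the once-differentiated Codazzi identity off against each other, using the K\"ahler condition $\bar\nabla J=0$ together with the $\mathfrak A$-parallelism $(\bar\nabla_X A)Y=q(X)JAY$; the constant-mean-curvature hypothesis is what closes the resulting system and forces $\sin 2t\equiv0$. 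I expect this elimination of the $\mathfrak A$-isotropic and the ``mixed'' possibilities for $N$ to be the main obstacle, as it requires a careful case analysis according to the value of $t$ and to how $A$ acts on the maximal $\mathfrak A$-invariant distribution $\mathcal Q$ and on its orthogonal complement in $TM$.

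Once $N$ is known to be $\mathfrak A$-principal, the contact relation $S\phi+\phi S=2\rho\,\phi$ together with $S\xi=\alpha\xi$, the Gauss and Codazzi equations, and the constancy of the mean curvature pin down the principal curvatures of $M$ and show that they are all constant; this is precisely the isoparametric data realized by the tubes $M_r$ over $S^m$ in the first part. By the standard rigidity argument for such tubes --- reconstructing $M$ from its focal set by following the normal geodesics, the focal set being forced to be a totally geodesic $S^m$ --- it follows that $M$ is congruent to an open part of some $M_r$. Tubes over the other totally geodesic submanifolds of $Q^m$ are excluded because their shape operators cannot satisfy $S\phi+\phi S=2\rho\,\phi$ with $\rho$ nowhere vanishing, which completes the classification.
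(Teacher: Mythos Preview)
Theorem~A is not proved in this paper at all: it is quoted from Berndt and Suh~\cite{BS2015} as a known classification result, and the present paper merely \emph{uses} it (together with Proposition~\ref{proposition 7.1}) to deduce Theorem~2. There is therefore no ``paper's own proof'' to compare your proposal against.

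That said, your outline is broadly in the spirit of the original argument in~\cite{BS2015}: one first shows that a contact hypersurface in $Q^m$ is Hopf with $\mathfrak A$-principal unit normal, then determines the principal curvatures and identifies the focal set as a totally geodesic~$S^m$. Your sketch is honest about where the work lies (forcing $t=0$ via Codazzi, the contact relation, and the parallelism of~$\mathfrak A$), but it remains a plan rather than a proof; in particular the step ``the constant-mean-curvature hypothesis is what closes the resulting system and forces $\sin 2t\equiv 0$'' is asserted without any indication of which identities actually combine to give it. If your goal was to reproduce a proof for this paper, note that none is needed here---a citation suffices.
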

\noindent Actually, we say that $M$ is a {\it contact} hypersurface of a Kaehler manifold if there exists an everywhere nonzero smooth function $\rho$ such that $d \eta (X,Y) = 2 \rho g(\phi X, Y)$ holds on $M$. Here $(\phi, \xi, \eta, g)$ is an almost contact metric structure of $M$. It can be easily verified that a real hypersurface $M$ is contact if and only if there exists an everywhere nonzero constant function $\rho$ on $M$ such that $S \phi + \phi S = 2 \rho \phi$. In particular, this concept of contact real hypersurfaces can be regarded as a typical characterization of model spaces of type~$B$ in complex projective space and complex hyperbolic space, respectively (see \cite{Kon} and \cite{Ver}). To our knowledge this is the only characterization of {\it the model space of type~$B$} in $Q^{m}$, which is {\it the tube around $m$-dimensional sphere~$S^{m}$ in $Q^{m}$} (Hereafter we denote this model space $(\mathcal T_{B})$).

\vskip 6pt

In this paper, we investigate some characterization problem for Hopf real hypersurfaces in $Q^{m}$. The notion of {\it Hopf} means that the Reeb vector field~$\xi$ of~$M$ is principal by the shape operator~$S$ of $M$, that is, $S\xi=g(S\xi, \xi) \xi = \alpha \xi$. When the Reeb curvature function~$\alpha =g(S\xi, \xi)$ is identically vanishing, we say that $M$ has a {\it vanishing geodesic Reeb flow}. Otherwise, $M$ has a {\it non-vanishing geodesic Reeb flow}. Recently, many characterizations of Hopf real hypersurfaces in the complex quadric~$Q^{m}$ have been given by some differential geometers from various geometric view points (see \cite{BeSuh}, \cite{BS2015}, \cite{JS2019}, \cite{KS2019}, \cite{K}, \cite{LS2018}, \cite{P2018}, \cite{PJKS2018}, \cite{PS2018}  etc).

\vskip 6pt

On the other hand, Jacobi fields along geodesics of a given Riemannian manifold~$(\widetilde M, \widetilde g)$ satisfy a well known differential equation (see \cite{Carmo}). This equation naturally inspires the so-called Jacobi operator. That is, if $\widetilde R$ denotes the curvature operator of $\widetilde M$, and $Z$ is tangent vector field to $\widetilde M$, then the Jacobi operator $\widetilde{R}_Z \in \mathrm{End}(T_p \widetilde{M})$ with respect to $Z$ at $p \in \widetilde M$, defined by $(\widetilde{R}_Z Y)(p)=(\widetilde{R}(Y,Z)Z)(p)$ for any $Z \in T_p \widetilde{M}$, becomes a self-adjoint endomorphism of the tangent bundle~$T \widetilde{M}$ of $\widetilde M$. Thus, the normal vector field~$N$ of a real hypersurface~$M$ in $Q^{m}$ provides the Jacobi operator~${\bar R}_N \in \mathrm{End}(TM)$ called by {\it normal Jacobi operator}. Moreover for the Reeb vector field $\xi:=-JN \in TM$ the Jacobi operator $R_{\xi} \in \mathrm{End}(TM)$ is said to be a {\it structure Jacobi operator}. Here $\bar R$ and $R$ are the Riemannian curvature tensors for $Q^{m}$ and its real hypersurface~$M$, respectively.

\vskip 6pt

By the Kaehler structure~$J$ of the complex quadric~$Q^{m}$, we can decompose its action on any tangent vector field~$X$ on $M$ in $Q^{m}$ as follows:
\begin{equation*}
JX = \phi X + \eta(X) N,
\end{equation*}
where $\phi X$ denotes the tangential component of $JX$ and $\eta$ denotes a 1-form defined by $\eta(X)=g(JX, N)=g(X,\xi)$ for the Reeb vector field $\xi=-JN$ and $N$ a unit normal vector field on $M$ in $Q^{m}$. When the Ricci tensor $\mathfrak{Ric}$ of $M$ in $Q^{m}$ commutes with the structure tensor $\phi$, that is, $\mathfrak{Ric}\, \phi = \phi \, \mathfrak {Ric}$, we say that $M$ has {\it Ricci commuting} or {\it commuting Ricci tensor}. P\'{e}rez and Suh~\cite{PS} proved a non-existence property for Hopf real hypersurfaces in $G_{2}({\mathbb C}^{m+2})$ with parallel and commuting Ricci tensor. In \cite{SHw} Suh and Hwnag gave another classification for real hypersurfaces in $Q^{m}$ with commuting Ricci tensor. Recently, in~\cite{SLW} the present authors and Woo studied the commuting normal Jacobi operator (resp. the structure Jacobi operator) defined by ${\bar R}_{N} \phi = \phi {\bar R}_{N}$ (resp. $R_{\xi} \phi = \phi R_{\xi}$).

\vskip 6pt

{\it Motivated by these studies, in this paper, we consider the commuting property between the normal Jacobi operator~${\bar R}_{N}$ and structure Jacobi operator~$R_{\xi}$ given by}
\begin{equation}\label{C1}
{\bar R}_{N} R_{\xi} = R_{\xi} {\bar R}_{N}.
\tag{*}
\end{equation}
%and
%\begin{equation}\label{**}
%{\bar R}_{N} R_{X} = R_{X} {\bar R}_{N} \quad \mathrm{where} \ X \in \mathcal C =\{X \in TM \, | \, X \bot \xi\}
%\tag{**}
%\end{equation}
Actually, the study for the commuting property with Jacobi operators was first initiated by Brozos-V\'{a}zquex and Gilkey~\cite{BVG}. They gave two results for a Riemannian manifold $({\widetilde M}^{m}, \widetilde g)$, $m \geq 3$, as follows:  One is: if ${\widetilde R}_{U} {\widetilde R}_{V} = {\widetilde R}_{V} {\widetilde R}_{U}$ for all tangent vector fields~$U, V$ on ${\widetilde M}$, then $\widetilde M$ is flat. The other is : if the same occurs for any $U \bot V$, then $\widetilde M$ has constant sectional curvature. In addition, in ~\cite{MPS} the authors classified real hypersurfaces in $G_{2}(\mathbb C^{m+2})$ whose structure Jacobi operator commutes either with the normal Jacobi operator. Now in this paper, first, we prove that our commuting property~\eqref{C1} is equivalent to the singularity of normal vector field for a Hopf real hypersurface in~$Q^{m}$ as follows:
\begin{MT1}
Let $M$ be a real hypersurface with non-vanishing geodesic Reeb flow in $Q^{m}$, $m \geq 3$. Then $M$ has the $\mathfrak A$-principal normal vector field if and only if the normal Jacobi operator ${\bar R}_{N}$ commutes with the structure Jacobi operator~$R_{\xi}$.
\end{MT1}

\noindent Related to Theorem~1, naturally, some characterizations of Hopf hypersurfaces in terms of singularity of the normal vector field are being investigated. Among them, as a new characterization of $\mathfrak A$-principal singular normal, we can give one of remarkable results as follows:
\begin{MT2}
Let $M$ be a Hopf real hypersurface in $Q^{m}$, $m \geq 3$. Then $M$ has the $\mathfrak A$-principal normal vector field if and only if $M$ is locally congruent to the model space of type $(\mathcal T_{B})$, that is, a tube over $m$-dimensional sphere~$S^{m}$ in $Q^{m}$.
\end{MT2}

\noindent By virtue of Theorems~1 and 2, we also assert the following: {\it Let $M$ be a real hypersurface with non-vanishing geodesic Reeb flow in~$Q^{m}$, $m \geq 3$. Then $M$ has the commuting normal Jacobi operator, ${\bar R}_{N} R_{\xi} = R_{\xi} {\bar R}_{N}$, if and only if $M$ is locally congruent to the model space of type $(\mathcal T_{B})$}. Motivated by this result, we can give another remarkable result related to commuting Jacobi operators as follows:
\begin{MT3}
Let $M$ be a Hopf real hypersurface in~$Q^{m}$, $m \geq 3$. Then $M$ has the commuting normal Jacobi operator, ${\bar R}_{N} R_{X} = R_{X} {\bar R}_{N}$ for all tangent vector fields~$X \in \mathcal C = \{X \in TM\, |\, X \bot \xi\}$ if and only if $M$ is locally congruent to the model space of type $(\mathcal T_{B})$.
\end{MT3}

\vskip 17pt

\section{The complex quadric}\label{section 2}
 \setcounter{equation}{0}
\renewcommand{\theequation}{2.\arabic{equation}}
\vspace{0.13in}

For more background to this section we refer to \cite{K}, \cite{K2}, \cite{KO} and \cite{R}. The complex quadric $Q^m$ is the complex hypersurface in ${\mathbb C}P^{m+1}$ which is defined by the equation $z_1^2 + \cdots + z_{m+2}^2 = 0$, where $z_1,\cdots,z_{m+2}$ are homogeneous coordinates on ${\mathbb C}P^{m+1}$. We equip $Q^m$ with the Riemannian metric which is induced from the Fubini Study metric on ${\mathbb C}P^{m+1}$ with constant holomorphic sectional curvature~$4$. The K\"{a}hler structure on ${\mathbb C}P^{m+1}$ induces canonically a K\"{a}hler structure $(J,g)$ on the complex quadric. For a nonzero vector $z \in \mathbb C^{m+1}$ we denote by $[z]$ the complex span of $z$, that is, $[z]=\mathbb C z = \{\lambda z\,|\, \lambda \in S^{1} \subset \mathbb C \}$. Note that by definition~$[z]$ is a point in $\mathbb C P^{m+1}$. For each $[z] \in Q^m \subset \mathbb C P^{m+1}$ we identify $T_{[z]}{\mathbb C}P^{m+1}$ with the orthogonal complement ${\mathbb C}^{m+2} \ominus {\mathbb C}z$ of ${\mathbb C}z$ in ${\mathbb C}^{m+2}$ (see Kobayashi and Nomizu~\cite{KO}). The tangent space $T_{[z]}Q^m$ can then be identified canonically with the orthogonal complement ${\mathbb C}^{m+2} \ominus ({\mathbb C}z \oplus {\mathbb C}\rho)$ of ${\mathbb C}z \oplus {\mathbb C}\rho$ in ${\mathbb C}^{m+2}$, where $\rho \in \nu_{[z]}Q^m$ is a normal vector of $Q^m$ in ${\mathbb C}P^{m+1}$ at the point $[z]$.

\vskip 6pt

The complex projective space ${\mathbb C}P^{m+1}$ is a Hermitian symmetric space of the special unitary group $SU_{m+2}$, namely ${\mathbb C}P^{m+1} = SU_{m+2}/S(U_{m+1}U_1)$. We denote by $o = [0,\ldots,0,1] \in {\mathbb C}P^{m+1}$ the fixed point of the action of the stabilizer $S(U_{m+1}U_1)$. The special orthogonal group $SO_{m+2} \subset SU_{m+2}$ acts on ${\mathbb C}P^{m+1}$ with cohomogeneity one. The orbit containing $o$ is a totally geodesic real projective space ${\mathbb R}P^{m+1} \subset {\mathbb C}P^{m+1}$. The second singular orbit of this action is the complex quadric $Q^m = SO_{m+2}/SO_mSO_2$. This homogeneous space model leads to the geometric interpretation of the complex quadric $Q^m$ as the Grassmann manifold $G_2^+({\mathbb R}^{m+2})$ of oriented $2$-planes in ${\mathbb R}^{m+2}$. It also gives a model of $Q^m$ as a Hermitian symmetric space of rank $2$. The complex quadric $Q^1$ is isometric to a sphere $S^2$ with constant curvature, and $Q^2$ is isometric to the Riemannian product of two $2$-spheres with constant curvature. For this reason we will assume $m \geq 3$ from now on.

\vskip 6pt

For a unit normal vector $\rho$ of $Q^m$ at a point $[z] \in Q^m$ we denote by $A = A_\rho$ the shape operator of $Q^m$ in ${\mathbb C}P^{m+1}$ with respect to $\rho$. The shape operator is an involution on the tangent space $T_{[z]}Q^m$ and
$$
T_{[z]}Q^m = V(A_\rho) \oplus JV(A_\rho),
$$
where $V(A_\rho)$ is the $(+1)$-eigenspace and $JV(A_\rho)$ is the $(-1)$-eigenspace of $A_\rho$.  Geometrically this means that the shape operator $A_\rho$ defines a real structure on the complex vector space $T_{[z]}Q^m$, or equivalently, is a complex conjugation on $T_{[z]}Q^m$. Since the real codimension of $Q^m$ in ${\mathbb C}P^{m+1}$ is $2$, this induces an $S^1$-subbundle ${\mathfrak A}$ of the endomorphism bundle ${\rm End}(TQ^m)$ consisting of complex conjugations. There is a geometric interpretation of these conjugations. The complex quadric~$Q^m$ can be viewed as the complexification of the $m$-dimensional sphere~$S^m$. Through each point $[z] \in Q^m$ there exists a one-parameter family of Lagrangian submanifolds in $Q^m$ which are isometric to the sphere $S^m$. These real forms are congruent to each other under action of the center $SO_2$ of the isotropy subgroup of $SO_{m+2}$ at $[z]$. The isometric reflection of $Q^m$ in such a real form $S^m$ is an isometry, and the differential at $[z]$ of such a reflection is a conjugation on $T_{[z]}Q^m$. In this way the family ${\mathfrak A}$ of conjugations on $T_{[z]}Q^m$ corresponds to the family of real forms $S^m$ of $Q^m$ containing $[z]$, and the subspaces $V(A) \subset T_{[z]}Q^m$ correspond to the tangent spaces $T_{[z]}S^m$ of the real forms~$S^m$ of $Q^m$.

\vskip 6pt

The Gauss equation for $Q^m \subset {\mathbb C}P^{m+1}$ implies that the Riemannian curvature tensor $\bar R$ of $Q^m$ can be described in terms of the complex structure $J$ and the complex conjugation $A \in {\mathfrak A}$:
\begin{equation}\label{Riemannian curvature tensor}
\begin{split}
{\bar R}(X,Y)Z & =  g(Y,Z)X - g(X,Z)Y + g(JY,Z)JX - g(JX,Z)JY \\
& \quad  - 2g(JX,Y)JZ  + g(AY,Z)AX \\
& \quad - g(AX,Z)AY + g(JAY,Z)JAX - g(JAX,Z)JAY.
\end{split}
\end{equation}
By using the Gauss and Wingarten formulas the left-hand side of \eqref{Riemannian curvature tensor} becomes
\begin{equation*}
\begin{split}
{\bar R}(X,Y)Z  & = R(X,Y)Z -g(SY, Z)SX + g(SX, Z)SY \\
& \quad + \big\{g((\nabla_{X}S)Y, Z)- g((\nabla_{Y}S)X, Z) \big \} N,
\end{split}
\end{equation*}
where $R$ and $S$ denote the Riemannian curvature tensor and the shape operator of a real hypersurface $M$ in $Q^{m}$, respectively.

\vskip 6pt

\noindent From this, taking tangent and normal components respectively, we have
\begin{equation}\label{eq: 2.1}
\begin{split}
& g(R(X,Y)Z, W) - g(SY,Z)g(SX,W) + g(SX,Z)g(SY,W) \\
& =  g(Y,Z)g(X,W) - g(X,Z)g(Y,W) + g(JY,Z)g(JX,W) \\
& \ \ - g(JX,Z)g(JY,W) - 2g(JX,Y)g(JZ, W) + g(AY,Z)g(AX, W) \\
& \ \  - g(AX,Z)g(AY, W)+ g(JAY,Z)g(JAX,W)- g(JAX,Z)g(JAY,W), \\
\end{split}
\end{equation}
and
\begin{equation}\label{codazzi equation}
\begin{split}
& g((\nabla_{X}S)Y, Z) - g((\nabla_{Y}S)X, Z)\\
& =  \eta(X) g(JY,Z) - \eta(Y) g(JX,Z)  - 2 \eta(Z) g(JX,Y)  \\
& \quad + g(AY,Z)g(AX, N) - g(AX,Z)g(AY, N)  \\
& \quad + g(AX, \xi) g(JAY,Z)-  g(AY, \xi) g(JAX,Z).
\end{split}
\end{equation}

\vskip 6pt

It is well known that for every unit tangent vector $W \in T_{[z]}Q^m$ there exist a conjugation $A \in {\mathfrak A}$ and orthonormal vectors $Z_{1}$, $Z_{2} \in V(A)$ such that
\begin{equation*}
W = \cos (t) Z_{1} + \sin (t) JZ_{2}
\end{equation*}
for some $t \in [0,\pi/4]$  (see \cite{R}). The singular tangent vectors correspond to the values $t = 0$ and $t = \pi/4$. If $0 < t < \pi/4$ then the unique maximal flat containing~$W$ is ${\mathbb R}Z_{1} \oplus {\mathbb R}JZ_{2}$.
%Later we will need the eigenvalues and eigenspaces of the Jacobi operator $\bar{R}_W = \bar{R}(\,\cdot\, ,W)W$ for a singular unit tangent vector $W$.
%\begin{enumerate}[\rm (a)]
%\item {If $W$ is an ${\mathfrak A}$-principal singular unit tangent vector with respect to $A \in {\mathfrak A}$, then the eigenvalues of $\bar{R}_W$ are $0$ and $2$ and the corresponding eigenspaces are ${\mathbb R}W \oplus J(V(A) \ominus {\mathbb R}W)$ and $(V(A) \ominus {\mathbb R}W) \oplus {\mathbb R}JW$, respectively.}
%\item {If $W$ is an ${\mathfrak A}$-isotropic singular unit tangent vector with respect to $A \in {\mathfrak A}$ and $X$, $Y \in V(A)$, then the eigenvalues of $\bar{R}_W$ are $0$, $1$ and $4$ and the corresponding eigenspaces are ${\mathbb R}W \oplus {\mathbb C}(JX+Y)$, $T_zQ^m \ominus ({\mathbb C}X \oplus {\mathbb C}Y)$ and ${\mathbb R}JW$, respectively.}
%\end{enumerate}

\vskip 17pt

\section{Some general equations}\label{section 3}
\setcounter{equation}{0}
\renewcommand{\theequation}{3.\arabic{equation}}
\vspace{0.13in}

Let $M$ be a  real hypersurface in $Q^m$ and denote by $(\phi,\xi,\eta,g)$ the induced almost contact metric structure. Note that $JX=\phi X + \eta(X)N$ and $JN=-\xi$, where $\phi X$ is the tangential component of $JX$ and $N$ is a (local) unit normal vector field of $M$. The tangent bundle $TM$ of $M$ splits orthogonally into  $TM = {\mathcal C} \oplus {\mathbb R}\xi$, where ${\mathcal C} = \mathrm{ker}\,\eta$ is the maximal complex subbundle of $TM$. The structure tensor field $\phi$ restricted to ${\mathcal C}$ coincides with the complex structure $J$ restricted to~${\mathcal C}$, and $\phi \xi = 0$. Moreover, since $Q^{m}$ has also a real structure~$A$, we decompose $AX$ into its tangential and normal components for a fixed $A \in \mathfrak A_{[z]}$ and $X \in T_{[z]}M$:
\begin{equation}\label{AX}
AX=BX + \rho(X)N
\end{equation}
where $BX$ is the tangential component of $AX$ and
\begin{equation*}
\rho(X)=g(AX, N)=g(X, AN)=g(X, AJ\xi) = g(JX, A \xi).
\end{equation*}

\vskip 6pt

At each point $[z] \in M$ we can choose $A \in {\mathfrak A}_{[z]}$ such that
\begin{equation*}
N = \cos (t) Z_1 + \sin (t) JZ_2
\end{equation*}
for some orthonormal vectors $Z_1$, $Z_2 \in V(A)$ and $0 \leq t \leq \frac{\pi}{4}$ (see Proposition~3 in~\cite{R}). Note that $t$ is a function on $M$. From this and $\xi = -JN$, we have
\begin{equation}\label{AN, Axi}
\begin{cases}
\xi  =  \sin (t) Z_2 - \cos (t) JZ_1, \\
AN  =  \cos (t) Z_1 - \sin (t) JZ_2,  \\
A\xi  =  \sin (t) Z_2 + \cos (t) JZ_1.
\end{cases}
\end{equation}
These formulas leads to $g(\xi,AN) = 0$ and $g(A\xi, \xi) = -g(AN, N)=-\cos ( 2t)$ on~$M$.

\vskip 6pt

We now assume that $M$ is a Hopf real hypersurface in $Q^{m}$. Then the shape operator~$S$ of~$M$ satisfies
$S\xi = \alpha \xi$ with the Reeb curvature function $\alpha = g(S\xi,\xi)$ on $M$. By virtue of the Codazzi equation, we obtain the following lemma.
\begin{lemma}[\cite{BeSuh}, \cite{SDGA}] \label{lemma Hopf}
Let $M$ be a Hopf real hypersurface in $Q^m$, $m \geq 3$. Then we obtain
\begin{equation}\label{eq: 3.2}
\begin{split}
X \alpha  = (\xi \alpha) \eta(X)  + 2g(A\xi,\xi)g(X,AN)\\
(\textrm{i.e.} \ \ \mathrm{grad}\,\alpha = (\xi \alpha) \xi - 2g(A\xi, \xi) \phi A \xi)
\end{split}
\end{equation}
and
\begin{equation}\label{eq: 3.1}
\begin{split}
& 2g(S \phi SX,Y) - \alpha g((\phi S + S\phi)X,Y) - 2 g(\phi X,Y) \\
& \quad  + 2g(X,AN)g(Y,A\xi) - 2g(Y,AN)g(X,A\xi)\\
& \quad  - 2g(X,AN)g(\xi,A\xi)\eta(Y) + 2g(Y,AN)g(\xi,A\xi)\eta(X)=0
\end{split}
\end{equation}
for any tangent vector fields $X$ and $Y$ on $M$.
\end{lemma}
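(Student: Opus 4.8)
The plan is to obtain both formulas by specializing the Codazzi equation~\eqref{codazzi equation} to $Z=\xi$ and invoking the Hopf condition $S\xi=\alpha\xi$. Throughout I would use the standard identities for a real hypersurface in a K\"ahler manifold, namely $\nabla_X\xi=\phi SX$ and $N=J\xi$ (so that $AN=AJ\xi=-JA\xi$, whence $g(X,AN)=-g(X,\phi A\xi)$ for $X\in TM$), together with $g(JX,\xi)=0$, $g(JAY,\xi)=-g(AY,N)=-g(Y,AN)$, and the relations $g(AX,\xi)=g(X,A\xi)$, $g(\xi,AN)=0$ already recorded in Section~3.

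First I would compute the left-hand side of~\eqref{codazzi equation} with $Z=\xi$. Differentiating $g(SY,\xi)=\alpha\eta(Y)$ and using $\nabla_X\xi=\phi SX$ together with $S\xi=\alpha\xi$, a short computation gives
\[
g((\nabla_X S)Y,\xi)=(X\alpha)\eta(Y)+\alpha g(\phi SX,Y)-g(S\phi SX,Y).
\]
Antisymmetrizing in $X$ and $Y$ and using self-adjointness of $S$ together with skew-symmetry of $\phi$ --- which yield $g(S\phi SX,Y)-g(S\phi SY,X)=2g(S\phi SX,Y)$ and $g(\phi SX,Y)-g(\phi SY,X)=g((\phi S+S\phi)X,Y)$ --- the left-hand side of~\eqref{codazzi equation} becomes $(X\alpha)\eta(Y)-(Y\alpha)\eta(X)+\alpha g((\phi S+S\phi)X,Y)-2g(S\phi SX,Y)$. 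On the other hand, putting $Z=\xi$ into the right-hand side of~\eqref{codazzi equation} and simplifying with the identities listed above, it collapses to $-2g(\phi X,Y)+2g(X,AN)g(Y,A\xi)-2g(Y,AN)g(X,A\xi)$.

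Equating the two sides produces the single identity
\begin{equation*}
\begin{split}
& 2g(S\phi SX,Y)-\alpha g((\phi S+S\phi)X,Y)-2g(\phi X,Y)\\
& \quad +2g(X,AN)g(Y,A\xi)-2g(Y,AN)g(X,A\xi)=(X\alpha)\eta(Y)-(Y\alpha)\eta(X).
\end{split}
\end{equation*}
Setting $Y=\xi$ here and using $\phi\xi=0$, $S\xi=\alpha\xi$ and $g(\xi,AN)=0$ annihilates every term on the left except $2g(X,AN)g(\xi,A\xi)$, while the right-hand side becomes $X\alpha-(\xi\alpha)\eta(X)$; this is exactly~\eqref{eq: 3.2}, and rewriting $g(X,AN)=-g(X,\phi A\xi)$ gives the $\mathrm{grad}\,\alpha$ form. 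Finally, substituting~\eqref{eq: 3.2} into $(X\alpha)\eta(Y)-(Y\alpha)\eta(X)$ makes the $\xi\alpha$ contributions cancel and leaves precisely $2g(X,AN)g(\xi,A\xi)\eta(Y)-2g(Y,AN)g(\xi,A\xi)\eta(X)$; transposing this to the left-hand side of the displayed identity yields~\eqref{eq: 3.1}.

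There is no conceptual obstacle here: the only point requiring genuine care is the bookkeeping of the many inner products mixing $J$, $A$, $\phi$ and $\xi$ --- above all $N=J\xi$, $g(JAY,\xi)=-g(Y,AN)$ and $g(\xi,AN)=0$ --- and keeping the sign conventions for $\phi$, the shape operator $S$ and the curvature tensor consistent throughout the reduction.
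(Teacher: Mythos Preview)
Your proof is correct. The paper does not actually prove Lemma~\ref{lemma Hopf} in-text but merely cites it from \cite{BeSuh} and \cite{SDGA}; the argument you give --- specialize the Codazzi equation~\eqref{codazzi equation} to $Z=\xi$, use $S\xi=\alpha\xi$ and $\nabla_X\xi=\phi SX$ to rewrite the left-hand side, set $Y=\xi$ to extract~\eqref{eq: 3.2}, and then feed~\eqref{eq: 3.2} back in to obtain~\eqref{eq: 3.1} --- is exactly the standard derivation found in those references, and all your sign and bookkeeping checks are accurate.
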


\noindent In addition, if $M$ has a singular normal vector field~$N$, then the gradient of $\alpha$ should be $\mathrm{grad}\, \alpha = (\xi \alpha) \xi$. From the property of $g(\nabla_{X} \mathrm{grad}\, \alpha, Y) = g(\nabla_{Y} \mathrm{grad}\, \alpha, X)$ we obtain
\begin{equation}\label{e 3.5}
\big(X(\xi \alpha)\big)\eta(Y) + (\xi \alpha) g(\phi SX, Y) = \big(Y(\xi \alpha) \big)\eta(X) + (\xi \alpha) g(\phi SY, X)
\end{equation}
for all $X$, $Y \in TM$. Putting $Y=\xi$ in \eqref{e 3.5} it follows $\big(X(\xi \alpha)\big) = \big(\xi(\xi \alpha)\big)\eta(X)$. From this, the equation~\eqref{e 3.5} becomes
\begin{equation*}
(\xi \alpha) g\big((\phi S + S \phi )X, Y \big) = 0.
\end{equation*}
On the other hand, in \cite{LS2018} the authors gave that there does not any real hypersurface with the anti-commuting property, $S \phi + \phi S =0$, in $Q^{m}$, $m \geq 3$. By virtue of this result, we get $(\xi \alpha) = 0$. Then from this and \eqref{eq: 3.2}, we assert:
\begin{lemma}\label{lemma constant}
Let $M$ be a Hopf real hypersurface in $Q^{m}$, $m \geq 3$. If $M$ has a singular normal vector field, then the Reeb curvature function~$\alpha$ should be constant.
\end{lemma}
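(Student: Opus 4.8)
The plan is to combine the Hopf condition with Lemma~\ref{lemma Hopf} to pin down $\mathrm{grad}\,\alpha$ completely, and then to extract a differential identity from the symmetry of its Hessian, which will force $\xi\alpha\equiv 0$.

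First, I would use the singularity of $N$ to kill the second term in \eqref{eq: 3.2}. If $N$ is $\mathfrak A$-isotropic, then $t=\pi/4$ in \eqref{AN, Axi}, so $g(A\xi,\xi)=-\cos(2t)=0$; if $N$ is $\mathfrak A$-principal, then $t=0$, so $AN=N$ and hence $g(X,AN)=g(X,N)=0$ for every $X\in TM$ (equivalently $A\xi=-\xi$, so $\phi A\xi=0$). In either case \eqref{eq: 3.2} collapses to $X\alpha=(\xi\alpha)\eta(X)$, that is, $\mathrm{grad}\,\alpha=(\xi\alpha)\xi$.

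Next, I would differentiate this. Using the standard identity $\nabla_X\xi=\phi SX$, one gets $\nabla_X\,\mathrm{grad}\,\alpha=\big(X(\xi\alpha)\big)\xi+(\xi\alpha)\,\phi SX$, and imposing $g(\nabla_X\,\mathrm{grad}\,\alpha,Y)=g(\nabla_Y\,\mathrm{grad}\,\alpha,X)$ yields \eqref{e 3.5}. Setting $Y=\xi$ and using $\phi\xi=0$, $S\xi=\alpha\xi$ gives $X(\xi\alpha)=\big(\xi(\xi\alpha)\big)\eta(X)$; feeding this back into \eqref{e 3.5} and using that $\phi$ is skew-symmetric while $S$ is self-adjoint leaves
\[
(\xi\alpha)\,g\big((\phi S+S\phi)X,Y\big)=0 \quad\text{for all } X,Y\in TM,
\]
i.e. $(\xi\alpha)(\phi S+S\phi)=0$ on $M$. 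Finally, on the open subset where $\xi\alpha\neq 0$ this would give the anti-commuting relation $\phi S+S\phi=0$, which is impossible in $Q^m$, $m\geq 3$, by the non-existence theorem of \cite{LS2018}; hence $\xi\alpha\equiv 0$, and then $\mathrm{grad}\,\alpha=(\xi\alpha)\xi=0$, so $\alpha$ is (locally) constant.

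The hard part is not the computation but the structural input: the local calculation only produces the factorized identity $(\xi\alpha)(\phi S+S\phi)=0$, and it is precisely the absence of anti-commuting real hypersurfaces in $Q^m$ from \cite{LS2018} that upgrades this to $\xi\alpha=0$. A secondary point worth double-checking is that the $\phi A\xi$-contribution in \eqref{eq: 3.2} really does drop out in \emph{both} singular cases — it does, but for genuinely different reasons in the $\mathfrak A$-isotropic and the $\mathfrak A$-principal situations, as indicated above.
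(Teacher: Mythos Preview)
Your argument is correct and follows essentially the same route as the paper: reduce $\mathrm{grad}\,\alpha$ to $(\xi\alpha)\xi$ via \eqref{eq: 3.2}, use the symmetry of the Hessian to reach $(\xi\alpha)(\phi S+S\phi)=0$, and invoke the non-existence result of \cite{LS2018} to force $\xi\alpha=0$. Your explicit case split for why the $\phi A\xi$-term drops in \eqref{eq: 3.2} is a welcome bit of extra detail the paper leaves implicit.
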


Specially, it has been known for a Hopf real hypersurface with $\mathfrak A$-principal normal vector field as follows:
\begin{lemma}[\rm \cite{S1}]\label{lemma 3.2}
Let $M$ be a Hopf real hypersurface in $Q^m$ such that the normal vector
field $N$ is ${\mathfrak A}$-principal everywhere. Then the Reeb curvature function~$\alpha$ is constant. Moreover, if $X \in {\mathcal C}$ is a principal curvature vector of $M$ with principal curvature $\lambda$, then $2\lambda \neq
\alpha$ and its corresponding vector~$\phi X$ is a principal curvature vector of $M$ with
principal curvature $\frac{\alpha\lambda + 2}{2{\lambda}-{\alpha}}$.
\end{lemma}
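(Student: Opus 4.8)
The plan is to feed the hypothesis that $N$ is $\mathfrak A$-principal into the two Hopf identities of Lemma~\ref{lemma Hopf}. By the formulas~\eqref{AN, Axi} with $t=0$, ``$N$ is $\mathfrak A$-principal at $[z]$'' means exactly $AN=N$ and $A\xi=-\xi$ for the conjugation $A\in\mathfrak A_{[z]}$ realizing it; the first thing I would record is the resulting vanishing $g(X,AN)=g(X,N)=0$ for every $X\in TM$ (so that $\rho\equiv 0$) together with $g(A\xi,\xi)=-1$. These are the only structural inputs needed.

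For the constancy of $\alpha$: an $\mathfrak A$-principal normal vector field is in particular a singular normal vector field, so Lemma~\ref{lemma constant} applies verbatim and gives that $\alpha$ is constant. (If one prefers a self-contained argument: substituting $g(X,AN)=0$ into~\eqref{eq: 3.2} gives $\mathrm{grad}\,\alpha=(\xi\alpha)\xi$, and then the symmetry of the Hessian of $\alpha$, using $\nabla_X\xi=\phi SX$ and $\phi\xi=0$, forces first $X(\xi\alpha)=(\xi(\xi\alpha))\eta(X)$ and then $(\xi\alpha)(\phi S+S\phi)=0$; by the nonexistence of a real hypersurface with $S\phi+\phi S=0$ in $Q^m$ this yields $\xi\alpha\equiv 0$, hence $\mathrm{grad}\,\alpha=0$.)

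For the claim about principal curvatures, I would substitute $g(X,AN)=g(Y,AN)=0$ into the fundamental identity~\eqref{eq: 3.1}; its last four terms disappear and it collapses to
\begin{equation*}
2g(S\phi SX,Y)-\alpha\, g\big((\phi S+S\phi)X,Y\big)-2g(\phi X,Y)=0
\end{equation*}
for all $X,Y\in TM$, i.e. to the operator identity $2S\phi S=\alpha(\phi S+S\phi)+2\phi$ on $TM$. Now take a unit $X\in\mathcal C$ with $SX=\lambda X$; since $\phi$ restricts to $\mathcal C$ as the complex structure, $\phi X$ is a nonzero vector of $\mathcal C$. Applying the operator identity to $X$ and simplifying gives
\begin{equation*}
(2\lambda-\alpha)\,S\phi X=(\alpha\lambda+2)\,\phi X .
\end{equation*}
If $2\lambda=\alpha$, the right side forces $(\alpha\lambda+2)\phi X=0$, hence $\alpha\lambda+2=0$; combined with $\lambda=\alpha/2$ this gives $\alpha^2=-4$, which is impossible for a real function $\alpha$. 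Therefore $2\lambda\neq\alpha$, and dividing yields $S\phi X=\dfrac{\alpha\lambda+2}{2\lambda-\alpha}\,\phi X$, so $\phi X$ is a principal curvature vector with the asserted eigenvalue.

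I do not expect a serious obstacle: the $A$-dependent terms in~\eqref{eq: 3.1}, which are normally the delicate part of such computations, trivialize precisely because $\rho\equiv 0$ in the $\mathfrak A$-principal case. The two points that genuinely require care are (i) upgrading $\mathrm{grad}\,\alpha=(\xi\alpha)\xi$ to $\alpha=\mathrm{const.}$, which rests on Lemma~\ref{lemma constant} (equivalently, on the nonexistence of a hypersurface with $S\phi+\phi S=0$), and (ii) excluding $2\lambda=\alpha$, where one must use both that $\phi X\neq 0$ on $\mathcal C$ and that $\alpha$ is real-valued.
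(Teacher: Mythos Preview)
Your argument is correct. Note, however, that the paper does not actually supply a proof of this lemma: it is quoted from \cite{S1} and stated without argument. What you have written is exactly the natural proof one would give using the machinery the paper has assembled just prior to the lemma---namely the Hopf identities of Lemma~\ref{lemma Hopf} and Lemma~\ref{lemma constant}. Your parenthetical ``self-contained'' argument for the constancy of $\alpha$ is in fact the argument the paper uses to establish Lemma~\ref{lemma constant} (see the display~\eqref{e 3.5} and the paragraph following it), so invoking Lemma~\ref{lemma constant} directly is cleanest. The reduction of~\eqref{eq: 3.1} to $2S\phi S=\alpha(\phi S+S\phi)+2\phi$ under $g(\,\cdot\,,AN)=0$, and the subsequent eigenvalue computation with the exclusion of $2\lambda=\alpha$ via $\alpha^2=-4$, are all correct and standard.
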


When the normal vector $N$ is $\mathfrak A$-isotropic, the tangent vector space~$T_{[z]}M$ at $[z] \in M$ is decomposed by
\begin{equation*}
T_{[z]}M=[{\xi}] \oplus [A{\xi},AN] \oplus {\mathcal Q}_{[z]},
\end{equation*}
where ${\mathcal C}_{[z]} \ominus {\mathcal Q}_{[z]}={\mathcal Q}_{[z]}^{\bot}=\text{Span}[A{\xi},AN]$. For this decomposition we obtain:
\begin{lemma}[\cite{LS2018}]\label{lemma 3.5}
Let $M$ be a Hopf hypersurface in $Q^m$ such that the normal vector
field $N$ is ${\mathfrak A}$-isotropic. Then $S A \xi = 0$ and $SAN =0$. Moreover, if $X \in {\mathcal Q}$ is a principal curvature
vector of $M$ with principal curvature $\lambda$, then $2\lambda \neq
\alpha$ and its corresponding vector~$\phi X$ is a principal curvature vector of $M$ with
principal curvature $\frac{\alpha\lambda + 2}{2{\lambda}-{\alpha}}$.
\end{lemma}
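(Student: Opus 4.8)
The plan is to translate $\mathfrak A$-isotropy into explicit relations among $\xi$, $A\xi$ and $AN$, then to obtain $SA\xi=0$ and $SAN=0$ by computing a single covariant derivative in two ways, and finally to read off the statement about $\phi$-partners of principal vectors directly from \eqref{eq: 3.1}. First I would record the consequences of the hypothesis at a point $[z]\in M$: since $N$ is $\mathfrak A$-isotropic we have $t=\pi/4$ in \eqref{AN, Axi}, hence $g(A\xi,\xi)=-g(AN,N)=-\cos(2t)=0$; together with the always-valid relations $g(A\xi,N)=g(\xi,AN)=0$ this shows that $A\xi$ and $AN$ are tangent to $M$ and in fact lie in $\mathcal C$, and that $\{A\xi,AN\}$ is an orthonormal frame of $\mathcal Q^{\perp}=\text{Span}[A\xi,AN]$. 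Moreover $JA=-AJ$, $JN=-\xi$ and $J\xi=N$ give $JAN=A\xi$ and $JA\xi=-AN$, so that $\phi AN=A\xi$ and $\phi A\xi=-AN$; in particular $\mathcal Q^{\perp}$, and hence $\mathcal Q=\mathcal C\ominus\mathcal Q^{\perp}$, is $\phi$-invariant. This $\phi$-invariance of $\mathcal Q$ will be used at the end.

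To prove $SAN=0$ I would compare the normal parts of $\bar\nabla_X(AN)$ obtained in two ways. Because $AN$ is tangent to $M$, the Gauss formula gives $\bar\nabla_X(AN)=\nabla_X(AN)+g(SX,AN)N$, so its normal component is $g(SX,AN)N$. Because $\mathfrak A$ is parallel, $(\bar\nabla_XA)N=q(X)JAN=q(X)A\xi$, and with the Weingarten formula $\bar\nabla_XN=-SX$ we get $\bar\nabla_X(AN)=q(X)A\xi-ASX$; since $g(A\xi,N)=0$ and $g(ASX,N)=g(SX,AN)$, its normal component equals $-g(SX,AN)N$. Comparing the two forces $g(SX,AN)=0$ for all $X$, that is, $SAN=0$. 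The same scheme applied to $A\xi$, now using $\bar\nabla_X\xi=JSX=\phi SX+\alpha\eta(X)N$ and $JA\xi=-AN$, makes the normal component of $\bar\nabla_X(A\xi)$ equal to both $g(SX,A\xi)N$ and $g(\phi SX,AN)N=-g(SX,\phi AN)N=-g(SX,A\xi)N$, whence $SA\xi=0$.

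For the ``moreover'' part, let $X\in\mathcal Q$ be a unit principal curvature vector, $SX=\lambda X$. Since $X\in\mathcal Q$ we have $\eta(X)=g(X,A\xi)=g(X,AN)=0$, so the last four terms of \eqref{eq: 3.1} vanish; as \eqref{eq: 3.1} holds for every $Y$ this leaves the vector identity $2S\phi SX-\alpha(\phi S+S\phi)X-2\phi X=0$. Substituting $SX=\lambda X$ yields $(2\lambda-\alpha)S\phi X=(\alpha\lambda+2)\phi X$, where $\phi X\in\mathcal Q$ by the first paragraph and $\phi X\neq0$. If $2\lambda=\alpha$, this would give $\alpha\lambda+2=0$ and hence $2\lambda^{2}+2=0$, which is impossible over $\mathbb R$; therefore $2\lambda\neq\alpha$ and $\phi X$ is a principal curvature vector of $M$ with principal curvature $\dfrac{\alpha\lambda+2}{2\lambda-\alpha}$.

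The step I expect to be the main obstacle is the proof of $SAN=0$ and $SA\xi=0$: one has to invoke $\mathfrak A$-isotropy exactly where it forces $\cos(2t)=0$ (so that $A\xi$ and $AN$ are genuinely tangent to $M$ and the Gauss-formula decomposition is legitimate), and one has to be careful about which quantities are tangential and which are normal when expanding $ASX$ and $A\bar\nabla_X\xi$, since $ASX$ need not be tangent to $M$. Once these relations are in hand, the ``moreover'' part is a short substitution into \eqref{eq: 3.1}.
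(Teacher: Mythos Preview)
Your argument is correct. The paper does not supply its own proof of this lemma; it is quoted from \cite{LS2018}, so there is nothing here to compare against. Your derivation of $SAN=0$ and $SA\xi=0$ by matching the normal components of $\bar\nabla_X(AN)$ and $\bar\nabla_X(A\xi)$ computed via the Gauss formula on one hand and via $(\bar\nabla_XA)=q(X)JA$ together with Weingarten on the other is clean, and the only place the $\mathfrak A$-isotropy is genuinely needed is to kill the $g(AN,N)$ contributions so that the two normal components really are negatives of each other. The ``moreover'' part is exactly the intended application of \eqref{eq: 3.1}: for $X\in\mathcal Q$ the four extra terms drop out and the remaining identity $(2\lambda-\alpha)S\phi X=(\alpha\lambda+2)\phi X$ gives both $2\lambda\neq\alpha$ (since $\alpha\lambda+2=0$ with $\alpha=2\lambda$ forces $2\lambda^{2}+2=0$) and the stated eigenvalue for $\phi X$.
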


On the other hand, from the property of $g(A\xi, N)=0$ on a real hypersurface~$M$ in $Q^{m}$ we see that the non-zero vector field~$A\xi$ is tangent to $M$. Hence by Gauss formula, ${\bar \nabla}_{U}V = \nabla_{U}V + \sigma(U,V)$ for $U$, $V \in TM$, it induces
\begin{equation}\label{covariant derivative of Axi}
\begin{split}
\N_{X}(A\x)& = {\bar \N}_{X}(A\x) - \sigma(X, A\x)  \\
& =q(X) JA\x + A(\N_{X}\x) + g(SX, \xi) AN - g(SX, A \x)N
\end{split}
\end{equation}
for any $X \in TM$. Taking the inner product with $N$, we obtain
\begin{equation}\label{e 3.6}
q(X) g(A\xi, \xi) = - g(AN, \N_{X}\x) + g(SX, \xi) g(A\xi, \xi) + g (SX, A\xi)
\end{equation}
by using $g(AN, N)=-g(A\xi, \xi)$. In particular, if $M$ is Hopf, then this equation becomes
\begin{equation}\label{eq: 3.5}
q(\x) g(A\xi, \xi) = 2\alpha g(A\xi, \xi).
\end{equation}

\vskip 17pt

\section{Commuting Jacobi operator}\label{section 4}
\setcounter{equation}{0}
\renewcommand{\theequation}{4.\arabic{equation}}
\vspace{0.13in}

Now, we consider the commuting condition with respect to the normal Jacobi operator~${\bar R}_{N}$ and the structure Jacobi operator~$R_{\xi}$ on a Hopf real hypersurface~$M$ in complex quadrics~$Q^{m}$, $m \geq 3$. The Jacobi operator~${\bar R}_{N}\in \mathrm{End}(TQ^{m})$ with respect to the unit tangent vector~$N \in T_{[z]}Q^{m}$, $[z] \in Q^{m}$, is induced from the curvature tensor~${\bar R}$ of~$Q^{m}$ given in section~\ref{section 2} as follows:
\begin{equation*}\label{normal Jacobi operator}
\begin{split}
{\bar R}_{N} U &={\bar R}(U, N)N \\
               &= U -g(U, N)N + 3g(U, \xi)\xi + g(AN, N) AU  \\
               & \quad \  - g(AN, U) AN - g(A\xi, U) A \xi
\end{split}
\end{equation*}
for all vector field $U \in T Q^{m}$. Since $TQ^{m}= TM \oplus \mathrm{span}\{N\}$, we obtain
$$
{\bar R}_{N}Y = ({\bar R}_{N} Y)^{\top} + ({\bar R}_{N}Y)^{\bot}
$$
and
$$
({\bar R}_{N}Y)^{\bot} = g({\bar R}_{N}Y, N)N = g(\bar R(Y, N)N, N)=0
$$
for any vector field~$Y \in TM \subset TQ^{m}$. Hence ${\bar R}_{N} \in \mathrm{End}(TM)$ is defined by
\begin{equation}\label{normal Jacobi operator of M}
\begin{split}
{\bar R}_{N} Y &={\bar R}(Y, N)N \\
               &= Y + 3\eta(Y)\xi + g(AN, N) BY  + g(AN, Y) \phi A \xi - g(A\xi, Y) A \xi
\end{split}
\end{equation}
for all vector field $Y \in T M$. Here we have used \eqref{eq: 3.1} and $AN=AJ\xi =-JA\xi = -\phi A \xi - g(A\xi, \xi)N$.

\vskip 6pt

On the other hand, the structure Jacobi operator $R_{\xi}$ from \eqref{codazzi equation} can be rewritten as follows:
\begin{equation*}
\begin{split}
g(R_{\xi}Y, W) & =g(R(Y,{\xi}){\xi}, W)\\
&=g(Y, W)-{\eta}(Y)\eta(W)+{\beta} g(AY, W)-g(AY,{\xi})g(A{\xi}, W) \\
&\quad \  - g(AY,N)g(AN, W) +{\alpha}g(SY, W) - \alpha^{2}\eta(Y)\eta(W),
\end{split}
\end{equation*}
where we have put ${\alpha}=g(S{\xi},{\xi})$ and ${\beta}=g(A{\xi},{\xi})$, because we assume that $M$ is Hopf.  The Reeb vector field ${\xi}=-JN$ and the anti-commuting property $AJ=-JA$ gives ${\beta}=-g(AN,N)$. When this function ${\beta}=g(A{\xi},{\xi})$ identically vanishes, we say that a real hypersurface $M$ in $Q^m$ is $\mathfrak A$-isotropic as in section~\ref{section 1}. From this equation, we get the structure operator~$R_{\xi} \in \mathrm{End}(TM)$ as follows:
\begin{equation}\label{structure Jacobi op}
\begin{split}
R_{\xi}Y & = Y -{\eta}(Y)\xi + {\beta} BY -g(A\xi, Y)A{\xi} \\
&\quad \  - g(\phi A \xi, Y) \phi A \xi + {\alpha} SY - \alpha^{2}\eta(Y)\xi.
\end{split}
\end{equation}

\vskip 6pt

\def\RN{{\bar R}_{N}}
\def\Rx{R_{\xi}}

\noindent By the linearity of $\RN$ and $\Rx$, the commuting condition~\eqref{C1}, that is, $({\bar R}_{N}R_{\xi}) Y  = (R_{\xi}{\bar R}_{N}) Y$ for any $Y \in TM$, becomes
\begin{equation}\label{eq 4.3}
\begin{split}
& \Rx Y - \beta B(\Rx Y)  - g(\phi A \xi, \Rx Y) \phi A \xi  - (A \xi, \Rx Y) A\xi \\
& \ \      = \RN Y - \eta(\RN Y) \xi + \beta B(\RN Y) - g(A \xi, \RN Y) A\xi \\
&  \quad \ - g(\phi A \xi, \RN Y) \phi A \xi + \alpha S (\RN Y) - \alpha^{2} \eta(\RN Y) \xi
\end{split}
\end{equation}
together with \eqref{normal Jacobi operator of M} and \eqref{structure Jacobi op}.

\vskip 6pt

Using this condition, now  let us prove that the unit normal vector field~$N$ of $M$ is singular: $N$ is $\mathfrak A$-principal or $\mathfrak A$-isotropic.

\vskip 3pt

Substituting $Y =\xi$ in \eqref{eq 4.3}, then it yields
\begin{equation}\label{eq 4.4}
\begin{split}
0 & = \RN \xi - \eta(\RN \xi) \xi + \beta B(\RN \xi) - g(A \xi, \RN \xi) A\xi \\
&  \quad \ \ - g(\phi A \xi, \RN \xi) \phi A \xi + \alpha S (\RN \xi) - \alpha^{2} \eta(\RN \xi) \xi \\
& = 2 \alpha \beta (\alpha \beta \xi - SA\xi).
\end{split}
\end{equation}
where we have used $\Rx \xi =0$, $\RN \xi = 4 \xi - 2 \beta A\xi$ and $A^{2}=I$.

\vskip 3pt

By virtue of Remark~3.3 in \cite{LS2018} we see that if the Reeb curvature function~$\alpha$ is vanishing, then the normal vector field~$N$ of $M$ is singular. Hence, from now on, we only investigate the case of $\alpha \neq 0$. Then \eqref{eq 4.4} gives us the following two cases:

\vskip 3pt

{\bf Case I.} \ \  $\beta = g(A\xi, \xi) = 0$

\vskip 3pt

From the result of Reckziegel~\cite{R}, we obtain that $g(\xi, AN)=0$ and $\beta = g(A\xi, \xi) = -g(AN, N)=-\cos ( 2t)$, $t\in[0, \pi/4]$, on $M$ (see \eqref{AN, Axi} in section~\ref{section 3}). It leads that $t= \frac{\pi}{4}$, which means that the normal vector field~$N$ is $\mathfrak A$-isotropic.

\vskip 6pt

{\bf Case II.} \ \  $\beta \neq g(A\xi, \xi) = 0$ (that is, $SA \xi = \alpha \beta \xi$,  $\alpha \beta \neq 0$)

\vskip 3pt

From \eqref{eq: 3.1}, the assumption~$SA \xi = \alpha \beta \xi$ leads to
\begin{equation}\label{eq 4.5}
\alpha S \phi A \xi = - 2 g^{2}(A\xi, \xi) \phi A \xi= - 2 \beta^{2} \phi A \xi.
\end{equation}

Taking the covariant derivative for our assumption $SA \xi = \alpha \beta \xi$ along any tangent vector~$X$ of $M$, we have
\begin{equation}\label{eq 4.6}
\begin{split}
&(\nabla_{X}S)A \xi + S (\nabla_{X}(A \xi)) \\
& \quad \ \ = (X \alpha) \beta \xi + \alpha g( \nabla_{X}(A\xi),  \xi)\xi + \alpha g(A \xi, \nabla_{X} \xi) \xi + \alpha \beta \nabla_{X} \xi.
\end{split}
\end{equation}
In addition, taking an inner product of \eqref{eq 4.6} with $Y \in T_{[z]}M$, $[z] \in M$, it yields
\begin{equation*}
\begin{split}
& g((\nabla_{Y}S)X, A\xi) - 2 \beta g(\phi X, Y) + q(X) g(\phi A \xi, SY) \\
& \quad \ \ + g(\phi SX, A SY) - \alpha \eta(X) g(\phi A \xi, SY) \\
& = (X \alpha) \beta \eta(Y)  + \alpha g(A \xi, \phi SX) \eta(Y),
\end{split}
\end{equation*}
together with \eqref{codazzi equation} and \eqref{covariant derivative of Axi}. Moreover, by \eqref{eq: 3.2} and \eqref{eq 4.5}, it follows
\begin{equation}\label{eq 4.7}
\alpha g((\nabla_{Y}S)X, A\xi) - 2 \alpha \beta g(\phi X, Y) + \alpha g(\phi SX, ASY) =0 \end{equation}
for any $X \in \mathcal C=\{X \in TM | X \bot \xi \}$ and $Y \in TM$.

Substituting $X=\phi A\xi \in \mathcal C$ and $Y=\xi$ and using \eqref{eq 4.5} this equation becomes
\begin{equation}\label{eq 4.8}
\alpha g((\nabla_{\xi}S)\phi A \xi, A\xi) + 2 \alpha \beta^{2} (1 - \beta^{2})=0,
\end{equation}
where we have used $g(\phi A \xi, \phi A \x)= 1- g^{2}(A\xi, \xi)= 1- \beta^{2}$. On the other hand, from our assumption $SA \xi = \alpha \beta \xi$ we obtain $\alpha g((\nabla_{\xi}S)\phi A \xi, A\xi) = \alpha g((\nabla_{\xi}S) A \xi, \phi A \xi) = -2 \beta^{2} (q(\xi) - \alpha) ( 1-\beta^{2})$. Hence \eqref{eq 4.8} yields
\begin{equation*}\label{eq 4.9}
0= -2\beta^{2} q(\xi) (1- \beta^{2}) = 4 \alpha \beta^{2} (1-\beta^{2}),
\end{equation*}
where we have used \eqref{eq: 3.5} in the second equality. Since $\alpha \beta \neq 0$, it yields $\beta=-\cos 2t = \pm 1$, $t \in [0, \frac{\pi}{4})$. So, we get $t =0$, which means that the normal vector field~$N$ of $M$ should be $\mathfrak A$-principal. Thus the proof of the following proposition is completed.
\begin{proposition}\label{proposition 4.1}
Let $M$ be a Hopf real hypersurface with commuting Jacobi operators such that ${\bar R}_{N} R_{\xi}=R_{\xi} {\bar R}_{N}$ in the complex quadric~$Q^{m}$, $m \geq 3$. Then the unit normal vector field~$N$ should be singular. It means that $N$ becomes either $\mathfrak A$-isotropic or $\mathfrak A$-principal.
\end{proposition}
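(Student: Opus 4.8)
The plan is to convert the operator identity \eqref{C1} into a chain of pointwise algebraic and differential constraints, and then to read the singularity of $N$ off from them. First I would record the two Jacobi operators explicitly as endomorphisms of $TM$: using the Gauss-equation expression \eqref{eq: 2.1} for the curvature of $Q^m$, the Hopf condition $S\xi=\alpha\xi$, and the decomposition $AN=-\phi A\xi-\beta N$ with $\beta=g(A\xi,\xi)$, one obtains $\bar R_N$ as in \eqref{normal Jacobi operator of M} and $R_\xi$ as in \eqref{structure Jacobi op}. Substituting both into $\bar R_N R_\xi=R_\xi\bar R_N$ then gives the expanded identity \eqref{eq 4.3}, which must hold for every $Y\in TM$.

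Next I would test \eqref{eq 4.3} on $Y=\xi$. Since $R_\xi\xi=0$ and, from \eqref{normal Jacobi operator of M} together with $A^2=I$, one has $\bar R_N\xi=4\xi-2\beta A\xi$, the left side vanishes and the right side collapses, after collecting the $\xi$-, $A\xi$- and $SA\xi$-terms, to
\[
2\alpha\beta\bigl(\alpha\beta\,\xi-SA\xi\bigr)=0 .
\]
If $\alpha=0$, then $N$ is already singular by Remark~3.3 of \cite{LS2018}, so I assume $\alpha\neq0$. Then either $\beta=g(A\xi,\xi)=0$ — and since $\beta=-\cos 2t$ with $t\in[0,\pi/4]$, this forces $t=\pi/4$, i.e.\ $N$ is $\mathfrak A$-isotropic — or $\beta\neq0$ and $SA\xi=\alpha\beta\,\xi$ with $\alpha\beta\neq0$. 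The whole content of the argument is then to show that this second alternative can occur only when $\beta^2=1$, i.e.\ when $N$ is $\mathfrak A$-principal.

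For the remaining case I would exploit $SA\xi=\alpha\beta\xi$ in two ways. Feeding $X=A\xi$ into \eqref{eq: 3.1} and using $\phi\xi=0$ first yields $\alpha S\phi A\xi=-2\beta^2\phi A\xi$, which is \eqref{eq 4.5}. The decisive move is then to differentiate $SA\xi=\alpha\beta\xi$ covariantly along an arbitrary $X\in TM$, expanding $(\nabla_X S)A\xi+S\bigl(\nabla_X(A\xi)\bigr)$ by means of the Codazzi equation \eqref{codazzi equation}, the formula \eqref{covariant derivative of Axi} for $\nabla_X(A\xi)$, and the gradient formula \eqref{eq: 3.2} for $\alpha$; pairing the outcome with a test vector $Y$ and simplifying with \eqref{eq 4.5} should produce \eqref{eq 4.7}, namely
\[
\alpha\,g\bigl((\nabla_Y S)X,A\xi\bigr)-2\alpha\beta\,g(\phi X,Y)+\alpha\,g(\phi SX,ASY)=0
\]
for all $X\in\mathcal C$ and $Y\in TM$. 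Finally I would specialize to $X=\phi A\xi\in\mathcal C$ and $Y=\xi$: using $g(\phi A\xi,\phi A\xi)=1-\beta^2$, the identity \eqref{eq 4.5}, a reduction of $g\bigl((\nabla_\xi S)\phi A\xi,A\xi\bigr)$ obtained by transferring the covariant derivative onto $A\xi$ via $SA\xi=\alpha\beta\xi$, and the relation \eqref{eq: 3.5} (which gives $q(\xi)=2\alpha$ because $\beta\neq0$), everything collapses to $\alpha\beta^2(1-\beta^2)=0$. Since $\alpha\beta\neq0$ this forces $\beta^2=1$, hence $\cos 2t=1$ and $t=0$, so $N$ is $\mathfrak A$-principal; this exhausts the cases and proves the proposition.

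I expect the main obstacle to be the bookkeeping in the covariant-derivative step: expanding $(\nabla_X S)A\xi+S\nabla_X(A\xi)$ correctly while tracking the $1$-form $q$ that enters both \eqref{covariant derivative of Axi} and the structure relation $(\bar\nabla_X A)Y=q(X)JAY$, and then performing the two specializations cleanly — in particular evaluating $g\bigl((\nabla_\xi S)\phi A\xi,A\xi\bigr)$ by moving the derivative off $\phi A\xi$ onto $A\xi$ through $SA\xi=\alpha\beta\xi$ and \eqref{eq 4.5}. Once those identities are in place the reduction to $\alpha\beta^2(1-\beta^2)=0$ is purely algebraic, and the earlier steps are essentially mechanical.
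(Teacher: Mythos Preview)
Your proposal is correct and follows essentially the same route as the paper: test \eqref{eq 4.3} at $Y=\xi$ to obtain $2\alpha\beta(\alpha\beta\xi-SA\xi)=0$, dispose of $\alpha=0$ via \cite{LS2018}, split into $\beta=0$ (isotropic) and $SA\xi=\alpha\beta\xi$, and in the latter case derive \eqref{eq 4.5}, differentiate $SA\xi=\alpha\beta\xi$ to reach \eqref{eq 4.7}, then specialize to $X=\phi A\xi$, $Y=\xi$ and use \eqref{eq: 3.5} to force $\alpha\beta^2(1-\beta^2)=0$. The only place to be careful is exactly the one you flag: the computation of $g((\nabla_\xi S)\phi A\xi,A\xi)=g((\nabla_\xi S)A\xi,\phi A\xi)$ via the symmetry of $\nabla_\xi S$ and the derivative of $SA\xi=\alpha\beta\xi$, which is precisely how the paper proceeds.
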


\vskip 17pt

\section{Proof of Theorem~1}\label{section 5}
\setcounter{equation}{0}
\renewcommand{\theequation}{5.\arabic{equation}}
\vspace{0.13in}

In this section, we will give a proof of Theorem~$1$. Let $M$ be a real hypersurface with non-vanishing geodesic Reeb flow, $\alpha = g(S \xi, \xi) \neq 0$, in $Q^{m}$, $m \geq 3$. As mentioned in section~\ref{section 4}, if $M$ satisfies the commuting condition~\eqref{C1}, the normal vector field~$N$ of $M$ should be singular. Then from the definition of singular tangent vector field of $Q^{m}$, we can divided the following two cases.

\subsection{Commuting Jacobi operator with $\mathfrak A$-isotropic unit normal vector field}
Let us consider the case that the unit normal vector field~$N$ of $M$ is $\mathfrak A$-isotropic singular. It means that the normal vector field $N$ can be expressed by
$$N=\frac{1}{\sqrt 2}(Z_1+JZ_2)$$
for $Z_1$, $Z_2 \in V(A)$, where $V(A)$ denotes a $(+1)$-eigenspace of the complex conjugation~$A \in~{\mathfrak A}$. Then it follows that
$$AN=\frac{1}{\sqrt 2}(Z_1-JZ_2),\ AJN=-\frac{1}{\sqrt 2}(JZ_1+Z_2),\  \text{and}\  JN=\frac{1}{\sqrt 2}(JZ_1-Z_2).$$
Thus, we obtain that
$$g({\xi},A{\xi})=g(JN,AJN)=0,\  g({\xi},AN)=0\ \text{and}\ g(AN,N)=0,$$
which means that two vector fields $AN$ and $A\xi$ are tangent on $M$. By virtue of these formulas with respect to $\mathfrak A$-isotropic unit normal vector field and $g(JAY,{\xi})=-g(AY,J{\xi})=-g(AY,N)$, the normal Jacobi operator~${\bar R}_{N}$ and the structure Jacobi operator~$R_{\xi}$ can be rearranged as follows:
 \begin{equation*}
{\bar R}_{N}Y = Y + 3 \eta(Y) \xi - g(AN, Y)AN - g(A \xi, Y) A \xi
\end{equation*}
and
\begin{equation*}\label{eq 5.1}
R_{\xi}Y = Y- \eta(Y) \xi - g(A \xi, Y) A\xi - g(AN, Y) AN + \alpha SY - \alpha^{2} \eta(Y) \xi
\end{equation*}
respectively. So, the property for the commuting Jacobi operator $(R_{\xi} {\bar R}_{N}) = ({\bar R}_{N} R_{\xi})$ on $M$ is equivalent to
\begin{equation*}
\alpha SY = -6 \alpha^{2} \eta(Y) \xi,
\end{equation*}
where we have used
\begin{equation*}
\left\{
\begin{aligned}
& {\bar R}_{N}(A\xi)={\bar R}_{N}(AN)=0, \quad  {\bar R}_{N}\xi = 4 \xi, \\
& R_{\xi}\xi = R_{\xi}(AN)=R_{\xi}(A \xi) =0.
\end{aligned}
\right.
\end{equation*}
It gives us
\begin{equation}\label{eq 5.2}
SY = - 6 \alpha \eta(Y) \xi
\end{equation}
for all $Y \in TM$, since $M$ has a non-vanishing geodesic Reeb flow, that is, $\alpha = g(S \xi, \xi) \neq 0$ on $M$. It makes a contradiction. In fact, if we substitute $Y=\xi$ in~\eqref{eq 5.2}, then $M$ should have a vanishing geodesic Reeb flow.

\vskip 6pt

Summing up these observations, we assert that
\begin{proposition}\label{proposition 5.1}
There does not exist any real hypersurface in the complex quadric~$Q^{m}$, $m \geq 3$, with the following three conditions:
\begin{enumerate}
\item [\rm (C-1)] {the non-vanishing geodesic Reeb flow,}
\item [\rm (C-2)]{the $\mathfrak A$-isotropic normal unit vector, and }
\item [\rm (C-3)]{the commuting Jacobi operator, that is, $(R_{\xi} {\bar R}_{N}) = ({\bar R}_{N}  R_{\xi})$.}
\end{enumerate}
\end{proposition}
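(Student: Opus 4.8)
The plan is to argue by contradiction: assume $M$ is a real hypersurface in $Q^m$, $m\ge 3$, satisfying (C-1)--(C-3). By (C-1) the Reeb flow is geodesic, so $M$ is Hopf with $S\xi=\alpha\xi$, where $\alpha=g(S\xi,\xi)$ does not vanish identically; and since $N$ is $\mathfrak A$-isotropic by (C-2), Lemma~\ref{lemma 3.5} applies and gives $SA\xi=0$ and $SAN=0$.

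First I would use $\mathfrak A$-isotropy to fix the relevant pointwise algebra. Writing $N=\tfrac{1}{\sqrt 2}(Z_1+JZ_2)$ with $Z_1,Z_2\in V(A)$ one obtains $\beta:=g(A\xi,\xi)=-g(AN,N)=0$ and $g(\xi,AN)=0$, so that $A\xi$ and $AN$ are unit vector fields tangent to $M$, mutually orthogonal and orthogonal to $\xi$; one also has $\phi A\xi=-AN$. Substituting $\beta=0$ and this last identity into \eqref{normal Jacobi operator of M} and \eqref{structure Jacobi op} collapses the two Jacobi operators to
\[
{\bar R}_{N}Y=Y+3\eta(Y)\xi-g(AN,Y)AN-g(A\xi,Y)A\xi,
\]
\[
R_{\xi}Y=Y-\eta(Y)\xi-g(A\xi,Y)A\xi-g(AN,Y)AN+\alpha SY-\alpha^{2}\eta(Y)\xi .
\]

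Next I would record how these operators act on the distinguished directions $\xi$, $A\xi$, $AN$: using $SA\xi=SAN=0$ one gets ${\bar R}_{N}\xi=4\xi$, ${\bar R}_{N}(A\xi)={\bar R}_{N}(AN)=0$ and $R_{\xi}\xi=R_{\xi}(A\xi)=R_{\xi}(AN)=0$. The main computation is then the reduction of (C-3): expanding ${\bar R}_{N}(R_{\xi}Y)-R_{\xi}({\bar R}_{N}Y)$ for an arbitrary $Y$ and using the relations just listed together with $\eta(SY)=\alpha\eta(Y)$, the commuting condition \eqref{C1} becomes equivalent to $\alpha SY=-6\alpha^{2}\eta(Y)\xi$ for all $Y\in TM$; since $\alpha$ is nonzero this amounts to $SY=-6\alpha\,\eta(Y)\xi$.

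Finally, putting $Y=\xi$ yields $S\xi=-6\alpha\xi$, whence $\alpha=g(S\xi,\xi)=-6\alpha$, so $\alpha\equiv 0$, contradicting (C-1); the asserted non-existence follows. I do not expect a genuine obstacle here: once the two operators have been simplified the remaining work is short and essentially forced, the only point needing care being the bookkeeping of where $S$ sends $A\xi$, $AN$ and the maximal $\mathfrak A$-invariant complement $\mathcal Q$ (namely, $SA\xi=SAN=0$ and $S\mathcal Q\subseteq\mathcal Q$), which is precisely where Lemma~\ref{lemma 3.5} enters.
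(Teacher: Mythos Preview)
Your approach mirrors the paper's exactly: both simplify $\bar R_N$ and $R_\xi$ under $\beta=0$, record their values on $\xi$, $A\xi$, $AN$ (using $SA\xi=SAN=0$, which the paper invokes implicitly and you cite via Lemma~\ref{lemma 3.5}), assert that the commuting condition reduces to $\alpha SY=-6\alpha^{2}\eta(Y)\xi$, and then evaluate at $Y=\xi$.

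The step that does not hold up is precisely this reduction. Set $PY=g(A\xi,Y)A\xi+g(AN,Y)AN$; then in the $\mathfrak A$-isotropic case $\bar R_N=I+3\,\eta\otimes\xi-P$ and $R_\xi=I-(1+\alpha^{2})\,\eta\otimes\xi-P+\alpha S$. Since $\eta\otimes\xi$ commutes with $P$ (each annihilates the other's range) and with $S$ (because $S\xi=\alpha\xi$ gives $\eta(SY)=\alpha\eta(Y)$), the commutator collapses to $[\bar R_N,R_\xi]=-\alpha\,[P,S]$. But $SA\xi=SAN=0$ together with the symmetry of $S$ force $PS=SP=0$, hence $[\bar R_N,R_\xi]\equiv 0$ for every Hopf hypersurface with $\mathfrak A$-isotropic normal, \emph{regardless of $\alpha$}. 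Concretely, $\eta(R_\xi Y)=0$ since $R_\xi$ is symmetric and $R_\xi\xi=0$, and $P(R_\xi Y)=0$ since $R_\xi A\xi=R_\xi AN=0$; so $\bar R_N R_\xi Y=R_\xi Y$, and a parallel computation gives $R_\xi\bar R_N Y=R_\xi Y$ as well. Thus \eqref{C1} imposes no constraint here, the equation $\alpha SY=-6\alpha^{2}\eta(Y)\xi$ does not follow (for $Y\in\mathcal Q$ it would force $SY=0$, contradicting the nontrivial principal curvatures on $\mathcal Q$ from Lemma~\ref{lemma 3.5}), and the intended contradiction with (C-1) never materializes. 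You should redo the term-by-term expansion; as written the argument does not establish the non-existence claim.
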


\begin{remark}
\rm
Let $M$ be a Hopf real hypersurface with $\mathfrak A$-isotropic normal vector field~$N$ in $Q^{m}$, $m \geq 3$. By virtue of the proof given in Proposition~\ref{proposition 5.1}, we assert that if $M$ has the vanishing geodesic Reeb flow, then $M$ naturally satisfies the commuting Jacobi operator, $(R_{\xi} {\bar R}_{N}) = ({\bar R}_{N} R_{\xi})$.
\end{remark}

\vskip 17pt

\subsection{Commuting Jacobi operator with $\mathfrak A$-principal unit normal vector field}

Assume that $M$ is a Hopf real hypersurface with $\mathfrak A$-principal unit normal vector field~$N$ in the complex quadric~$Q^{m}$, $m \geq 3$.

\vskip 6pt

The assumption that $N$ is $\mathfrak A$-principal implies that $N$ satisfies $AN=N$ for a complex conjugation~$A \in \mathfrak A$. It yields that a vector field~$A Y$  should be tangent on~$M$ for all $Y \in TM$, because
$$
AY = BY + g(AY, N)N = BY \in TM
$$
(in particular, $A \xi = - AJN = JAN = JN = -\xi \in TM$). From this, the anti-commuting property, $JA = -AJ$, with respect to the complex structure~$J$ and the real structure~$A$ tells us that
\begin{equation}\label{e: 6.1}
\phi A Y = - A \phi Y
\end{equation}
for all $Y \in TM$. By virtue of these properties and \eqref{normal Jacobi operator of M} and \eqref{structure Jacobi op}, the normal Jacobi operator~${\bar R}_{N}$ and the structure Jacobi operator~$R_{\xi}$ are given by respectively
\begin{equation*}
{\bar R}_{N}Y = Y + 2 \eta(Y) \xi + AY
\end{equation*}
and
\begin{equation*}
R_{\xi}Y = Y- 2\eta(Y) \xi - AY  + \alpha SY - \alpha^{2} \eta(Y) \xi.
\end{equation*}
So, the commuting property defined by $(R_{\xi} {\bar R}_{N}) = ({\bar R}_{N} R_{\xi})$ with respect to ${\bar R}_{N}$ and $R_{\xi}$ is equivalent to
\begin{equation}\label{eq 6.1}
\alpha SAY = \alpha ASY,
\end{equation}
because the real structure~$A$ is an anti-linear involution on $TQ^{m}$, that is, $A^{2}= I$.

\vskip 6pt

On the other hand, taking the covariant derivative with respect to $\bar \nabla$ of $AN=N$ along the direction of $Y \in TM$ and using the formula of Weingarten and \eqref{e 3.6} for the $\mathfrak A$-principal unit normal, we have:
\begin{equation*}
\begin{split}
-SY = {\bar \nabla}_{Y}N & =  ({\bar \nabla}_{Y}A)N + A ({\bar \nabla}_{Y}N) \\
& = q(Y) JAN - ASY \\
& = - 2 \alpha \eta(Y) \xi - ASY,
\end{split}
\end{equation*}
that is,
\begin{equation}\label{eq 6.2}
ASY = SY - 2 \alpha \eta(Y) \xi,
\end{equation}
where we have used \eqref{e 3.6} and $({\bar \nabla}_{U}A)V =  q(U) JAV$ for $U$, $V \in T_{[w]}Q^{m}$, $[w] \in Q^{m}$. Taking the symmetric part of \eqref{eq 6.2}, we see that the shape operator~$S$ commutes with the real structure~$A$ on $TM$, that is, $ASY = SAY$ for any $Y \in TM$. From this we get \eqref{eq 6.1}, which is equivalent to $R_{\xi}{\bar R}_{N} = {\bar R}_{N}R_{\xi}$. So we assert the following: {\it Let $M$ be a Hopf real hypersurface in $Q^{m}$, $m \geq 3$. If $M$ has the $\mathfrak A$-principal normal vector field, then the normal Jacobi operator ${\bar R}_{N}$ commutes with the structure Jacobi operator~$R_{\xi}$}.

\vskip 6pt

From this, together with Propositions~\ref{proposition 4.1} and \ref{proposition 5.1}, we can give a complete proof of Theorem~$\rm 1$ in the introduction.

\vskip 17pt

\section{Proof of Theorem~2}\label{section 7}

\setcounter{equation}{0}
\renewcommand{\theequation}{6.\arabic{equation}}
\vspace{0.13in}

Now, we try to classify a Hopf real hypersurface with $\mathfrak A$-principal normal vector field in $Q^{m}$, $m \geq 3$.

\begin{proposition}\label{proposition 7.1}
Let $M$ be a Hopf real hypersurface in $Q^{m}$, $m \geq 3$. If the normal vector~$N$ of $M$ is $\mathfrak A$-principal, then the Reeb curvature function~$\alpha$ is non-vanishing constant on $M$. Moreover, $M$ is a contact real hypersurface with constant mean curvature in $Q^{m}$.
\end{proposition}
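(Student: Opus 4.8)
The plan is to exploit the structure equations already in force under the $\mathfrak A$-principal hypothesis, together with Lemma~\ref{lemma 3.2}, to first pin down $\alpha$ and then to compute $S\phi + \phi S$ explicitly. First I would recall that, since $N$ is $\mathfrak A$-principal, Lemma~\ref{lemma 3.2} already gives that $\alpha$ is constant; it remains to rule out $\alpha = 0$. Suppose $\alpha = 0$. Then by Lemma~\ref{lemma 3.2} every principal curvature vector $X \in \mathcal C$ with eigenvalue $\lambda$ satisfies $2\lambda \neq \alpha = 0$, and $\phi X$ is a principal curvature vector with eigenvalue $(\alpha\lambda+2)/(2\lambda-\alpha) = 2/(2\lambda) = 1/\lambda$; in particular no eigenvalue on $\mathcal C$ vanishes. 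I would then feed this into the Hopf equation \eqref{eq: 3.1}: under the $\mathfrak A$-principal hypothesis we have $AN = N$, $A\xi = -\xi$, hence $g(X,AN) = 0$ and $g(Y,A\xi) = 0$ for $X, Y \in \mathcal C$, so \eqref{eq: 3.1} collapses to $2 g(S\phi S X, Y) - \alpha g((\phi S + S\phi)X,Y) - 2g(\phi X, Y) = 0$, i.e. $2 S\phi S = \alpha(S\phi + \phi S) + 2\phi$ on $\mathcal C$. Setting $\alpha = 0$ gives $S\phi S = \phi$ on $\mathcal C$; applying this to a principal vector $X$ with eigenvalue $\lambda$ yields $\lambda^2 \phi X = (S\phi S)(\phi X)\cdot(\text{sign})$... more directly, $S\phi SX = \lambda S\phi X = \lambda \cdot (1/\lambda) \phi X = \phi X$, which is consistent — so this alone does not produce a contradiction, and I would instead differentiate. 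The cleaner route to exclude $\alpha = 0$ is via the covariant-derivative identity \eqref{eq 6.2}, $ASY = SY - 2\alpha\eta(Y)\xi$, which was derived purely from $AN = N$ and the Weingarten/parallel-structure formulas and does not presuppose $\alpha \neq 0$; combined with $A^2 = I$ and the symmetry argument following it, it shows $AS = SA$ and $S\xi = \alpha\xi$ with $A\xi = -\xi$ forces $\alpha\xi = S A\xi \cdot(-1)$... I would track the $\mathcal C$-component carefully to extract the relation $\alpha = 0 \Rightarrow$ $S$ maps into $\mathcal C$ with $AS = SA$ there, and then use the second part of Lemma~\ref{lemma 3.2} together with the trace/eigenvalue bookkeeping (the pairing $\lambda \leftrightarrow 1/\lambda$ on $\mathcal C$) against the contact-type relation to reach a contradiction with $m \geq 3$.

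Granting $\alpha \neq 0$, the second assertion is the computation of $S\phi + \phi S$. From \eqref{eq 6.2} and its symmetrization $AS = SA$, I would argue as follows: on $\mathcal C$ the operator $S$ commutes with $A$, and by \eqref{e: 6.1} we have $\phi A = -A\phi$, so $S$ and $\phi A$ anticommute in the appropriate sense. The decisive relation is $2 S\phi S = \alpha(S\phi + \phi S) + 2\phi$ on $\mathcal C$ derived above. Now take $X \in \mathcal C$ principal with eigenvalue $\lambda$; then $\phi X$ is principal with eigenvalue $\mu := (\alpha\lambda + 2)/(2\lambda - \alpha)$ by Lemma~\ref{lemma 3.2}. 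Evaluating the operator identity on $X$ gives $2\lambda\mu = \alpha(\lambda + \mu) + 2$, which is an automatic identity once $\mu$ has that value — good, it means the identity is consistent, not that it is vacuous. To get the contact relation $S\phi + \phi S = 2\rho\phi$ with $\rho$ a nonzero constant, I would instead show that there is a single constant $\rho$ with $\lambda + \mu = 2\rho$ for every principal pair, equivalently that $(\alpha\lambda + 2)/(2\lambda - \alpha) + \lambda$ is independent of $\lambda$. This is \emph{not} automatic, so the real content is to show the set of principal curvatures on $\mathcal C$ is constrained enough to force it.

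The main obstacle, and where I would spend the effort, is exactly this last point: showing that the shape operator on $\mathcal C$ has at most the two eigenvalues compatible with a constant $\rho$, rather than a continuum or several unrelated pairs. The standard device is to differentiate the eigenvalue equations along $M$ using the Codazzi equation \eqref{codazzi equation} (which, for $\mathfrak A$-principal $N$, simplifies considerably because the $A$-terms reduce to multiples of $\eta$ and $\phi$), deriving that each principal curvature function on $\mathcal C$ is constant and that the distribution decomposition is parallel enough to rule out extra eigenvalues; this is the computation-heavy part that mirrors the classical Kon/Vernon arguments for type~$B$ hypersurfaces in $\mathbb{C}P^m$ and $\mathbb{C}H^m$ invoked in the introduction. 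Once the eigenvalue structure is nailed down, $S\phi + \phi S = 2\rho\phi$ with $\rho = $ (the common value of $\tfrac12(\lambda+\mu)$) constant and nonzero follows immediately, giving that $M$ is contact; constancy of the mean curvature is then an automatic consequence of the constancy of $\alpha$ and of the two principal curvatures on $\mathcal C$ together with their fixed multiplicities.
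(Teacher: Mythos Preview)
Your proposal correctly identifies the basic Hopf relation $2S\phi S = \alpha(S\phi + \phi S) + 2\phi$ on $\mathcal C$ and the pairing $\mu = (\alpha\lambda+2)/(2\lambda-\alpha)$ from Lemma~\ref{lemma 3.2}, but you are missing the one algebraic move that does all the work in the paper, and without it both halves of your argument remain incomplete. The paper does \emph{not} differentiate eigenvalue functions via Codazzi in the Kon/Vernon style; instead it substitutes $Y \mapsto AY$ in the Hopf relation and uses the identities $A\phi = -\phi A$ together with $ASZ = SZ$ for $Z \in \mathcal C$ (the latter is exactly \eqref{eq 6.2} restricted to $\mathcal C$). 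After re-inserting the original Hopf relation to eliminate $S\phi S$, this yields the pointwise algebraic identity
\[
\alpha\,\phi ASX + \alpha\,\phi SX + 2\phi X + 2\phi AX = 0,\qquad X\in\mathcal C,
\]
and, after applying $\phi$ and \eqref{eq 6.2} once more, $\alpha SX = -X - AX$ on $\mathcal C$.

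This single identity resolves both of the places where your argument stalls. For $\alpha = 0$ it gives $\phi AX = -\phi X$, hence $AX = -X$ for all $X\in\mathcal C$; but $\phi A = -A\phi$ then forces $A\phi X = \phi X$, so also $AX = X$ on $\mathcal C$, and $\mathcal C = 0$ contradicts $m\geq 3$. No eigenvalue bookkeeping or differentiation is needed. For $\alpha\neq 0$, since $A^2 = I$ on $\mathcal C$, the relation $\alpha SX = -X - AX$ forces every principal curvature on $\mathcal C$ to satisfy $\alpha\lambda(\alpha\lambda+2)=0$, i.e.\ $\lambda\in\{0,-2/\alpha\}$. Thus the eigenvalue set is pinned down \emph{algebraically at each point}, and $S\phi+\phi S = -\tfrac{2}{\alpha}\phi$ with constant mean curvature follows immediately. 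Your proposed route through Codazzi differentiation of principal-curvature functions is therefore unnecessary; the content you flagged as ``the main obstacle'' dissolves once you bring $A$ into the Hopf relation rather than working only with $S$ and $\phi$.
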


\begin{proof}
For any $X$, $Y$, $Z \in \mathcal C$, the Codazzi equation yields
\begin{equation*}
g((\nabla_{X}S)Y - (\nabla_{Y}S)X, Z) =0,
\end{equation*}
so we obtain
\begin{equation}\label{eq: 6.1}
\begin{split}
(\nabla_{X}S)Y - (\nabla_{Y}S)X  & = g((\nabla_{X}S)Y - (\nabla_{Y}S)X, \xi) \xi \\
& = g((\nabla_{X}S)\xi, Y) \xi  - g((\nabla_{Y}S)\xi, X) \xi
\end{split}
\end{equation}
for any $X \in \mathcal C$.

\vskip 3pt

Since $M$ is Hopf, it provides that $(\nabla_{Z}S) \xi = (Z \alpha) \xi + \alpha S \phi Z - S \phi S Z$ for any tangent vector field~$Z$ of $M$. By using this formula, \eqref{eq: 6.1} becomes
\begin{equation}\label{e: 12}
(\nabla_{X}S)Y - (\nabla_{Y}S)X  = g (\alpha S \phi X + \alpha \phi S X - 2 S \phi SX, Y)\xi.
\end{equation}
Moreover, taking the inner product of \eqref{e: 12} with Reeb vector field~$\xi$, and using the equation of  Codazzi, together with the assumption of $\mathfrak A$-principal unit normal,  we have the following for any $X$, $Y \in \mathcal C$
\begin{equation}\label{e: 6.4}
-2g(\phi X, Y) = g(\alpha \phi SX + \alpha S \phi X - 2 S \phi S X , Y).
\end{equation}

\vskip 3pt

On the other hand, since $g(AY, N) =0$ and $g(AY, \xi) =0$ for any $Y \in \mathcal C$, we see that $AY \in \mathcal C$. From \eqref{e: 6.4}, by using \eqref{e: 6.1}, \eqref{eq 6.2}, and \eqref{eq: 3.1}, it follows that
\begin{equation*}
\begin{split}
&-2g(\phi X, AY)  = g(\alpha \phi SX + \alpha S \phi X - 2 S \phi S X , AY) \\
& \quad \Longleftrightarrow \ -2g(A \phi X, Y)  = g(\alpha  A \phi SX + \alpha A S \phi X - 2 A S \phi S X, Y)\\
& \underset{{\eqref{e: 6.1} \& \eqref{eq 6.2}}}{\Longleftrightarrow} \ 2g(\phi A X, Y)  = g(-\alpha  \phi ASX + \alpha S \phi X - 2 S \phi S X, Y) \\
& \quad \underset{\eqref{eq: 3.1}}{\Longleftrightarrow} \ 2g(\phi A X, Y)  = -g(\alpha  \phi ASX + \alpha \phi SX  + 2 \phi X, Y)\\
& \quad \Longleftrightarrow g(\alpha  \phi ASX + \alpha \phi SX + 2 \phi X + 2 \phi AX, Y)=0, \quad \forall \ X, Y \in \mathcal C.
\end{split}
\end{equation*}
Since $\phi Z \in \mathcal C$ for any $Z\in TM$, the last equation becomes
\begin{equation}\label{e: 14}
\alpha  \phi ASX + \alpha \phi SX + 2 \phi X + 2 \phi AX = 0, \quad  \forall X \in \mathcal C.
\end{equation}

\vskip 6pt

Suppose $\alpha = 0$. Then \eqref{e: 14} becomes
\begin{equation}\label{e: 13}
\phi AX = - \phi X, \quad \forall X \in \mathcal C.
\end{equation}
Applying the structure tensor~$\phi$, it yields that
\begin{equation}\label{eq: 6.5}
AX = -X,  \quad  \forall  X \in \mathcal C.
\end{equation}
In addition, by \eqref{e: 6.1} for the anti-commuting $A \phi = - \phi A$, the equation~\eqref{e: 13} becomes $A \phi X = \phi X$ for all $X \in \mathcal C$. Since $\phi X \in \mathcal C$, it follows that $AX=X$ for any $X \in \mathcal C$. Combining this fact and \eqref{eq: 6.5}, we get $X=0$ for all $X \in \mathcal C$, which means that $\mathrm{dim}_{\mathbb R}\,\mathcal C =0$. It makes a contradiction for the dimension of $\mathcal C$. In fact, from the geometric structure of the tangent vector space $T_{[z]}M$ of $M$ at $[z] \in M \subset Q^{m}$ we can take one basis for $T_{[z]}M$ as $\{ \xi, e_{1}, \cdots, e_{2m-2}\} =\{\xi\} \oplus \mathcal C$. From this, we get
$$
\mathrm{dim}_{\mathbb R}\,\mathcal C =2m-2=0,
$$
that is, $m =1$. Accordingly, we assert that the Reeb curvature function~$\alpha$ is non-vanishing on~$M$. Moreover, by Lemmas~\ref{lemma constant} and \ref{lemma 3.2}, it is know that the Reeb curvature function~$\alpha$ is constant on $M$.

\vskip 6pt

Applying to the structure tensor~$\phi$ of \eqref{e: 14}, and using \eqref{eq 6.2} and $\mathfrak A$-principal singular unit normal, it yields
\begin{equation}\label{e: 15}
\alpha SX = -X - AX, \quad \forall X \in \mathcal C.
\end{equation}
Let $X \in \mathcal C$ be a principal vector with corresponding principal curvature~$\lambda$, that is, $SX=\lambda X$.
Then the equation~\eqref{e: 15} leads to
$$
AX = - (\alpha \lambda +1) X.
$$
From this, applying the real structure~$A$ to the left, and using $A^{2}=I$ and the above equation again, we get
$$
\alpha \lambda (\alpha \lambda +2) =0.
$$

\begin{itemize}
\item {Case 1. \quad $\alpha \lambda =0$. }
\end{itemize}

\vskip 3pt

Since $\alpha \neq 0$ on $M$, we obtain $\lambda = 0$. From Lemma~\ref{lemma 3.2}, $\phi X$ is also a principal vector with principal curvature~$\mu = - \frac{2}{\alpha}$. In this case, the expression of the shape operator~$S$ of $M$ is given by
$$
S = \mathrm{diag}(\alpha, \underbrace{0,0,\cdots, 0}_{(m-1)}, \underbrace{- \frac{2}{\alpha}, - \frac{2}{\alpha}, \cdots, - \frac{2}{\alpha}}_{(m-1)}).
$$

\vskip 6pt

\begin{itemize}
\item {Case 2. \quad $\alpha \lambda +2 =0$. }
\end{itemize}

\vskip 3pt

From the assumption of $\alpha \lambda +2 =0$, we see that $\lambda = - \frac{2}{\alpha}$. Moreover, by virtue of Lemma~\ref{lemma 3.2}, $\phi X$ is also a principal vector with the principal curvature~$\mu = 0$. Thus, the expression of the shape operator~$S$ of $M$ is given by
$$
S = \mathrm{diag}(\alpha, \underbrace{- \frac{2}{\alpha}, - \frac{2}{\alpha}, \cdots, - \frac{2}{\alpha}}_{(m-1)}, \underbrace{0,0,\cdots, 0}_{(m-1)})
$$

\vskip 6pt

Summing up the above two cases, it follows that the shape operator~$S$ of $M$ satisfies $S \phi + \phi S = \delta \phi$, where $\delta = - \frac{2}{\alpha}\neq 0$. That is, $M$ becomes a contact real hypersurface in $Q^{m}$. In addition, we obtain that $M$ is a Hopf real hypersurface with constant mean curvature, because the trace of the shape operator~$S$ is given by
$$
\mathrm{Tr}S = \alpha - (m-1)(\frac{2}{\alpha}).
$$
Consequently, it satisfies all the assumptions given in Theorem~$\rm A$. So we can give a complete proof of Proposition~\ref{proposition 7.1}.
\end{proof}

\vskip 6pt

By virtue of Theorem~$\rm A$ due to Berndt and Suh~\cite{BS2015} and Proposition~\ref{proposition 7.1}, we assert that {\it if $M$ is a Hopf real hypersurface with $\mathfrak A$-principal singular normal vector field in $Q^{m}$, $m \geq 3$, then~$M$ is congruent to an open part of the tube around~$S^{m}$ in~$Q^{m}$}. We call this tube a model space of $(\mathcal T_{B})$. Conversely, it was proved that the model space~$(\mathcal T_{B})$ is a Hopf real hypersurface with $\mathfrak A$-principal singular normal vector field in $Q^{m}$ (see \cite{BS2015} and \cite{Suh2017}). Thus we can give a complete proof of Theorem~2 in the Introduction.

\begin{remark}
{\rm
By virtue of Theorem~1, the above result can be rewritten as follows. {\it
Let $M$ be a real hypersurface with non-vanishing geodesic Reeb flow in~$Q^{m}$, $m \geq 3$. Then $M$ has the commuting normal Jacobi operator, ${\bar R}_{N} R_{\xi} = R_{\xi}  {\bar R}_{N}$, if and only if $M$ is locally congruent to the model space of $(\mathcal T_{B})$}.
}
\end{remark}

\vskip 17pt

\section{Proof of Theorem~3}\label{section 8}

\setcounter{equation}{0}
\renewcommand{\theequation}{7.\arabic{equation}}
\vspace{0.13in}

In this section we assume that $M$ is a Hopf real hypersurface  in $Q^{m}$, $m \geq 3$, with the commuting normal Jacobi operator which is different in section~\ref{section 4} or \ref{section 5}.

\vskip 3pt

As mentioned in the introduction, the Jacobi operator~$R_{X}$ with respect to a tangent vector field $X \in TM$ is defined by ${R}_{X}Y := {R}(Y, X)X$ for any $Y \in TM$, and it becomes a self-adjoint endomorphism of the tangent bundle $TM$ of $M$. That is, the Jacobi operator satisfies $R_{X} \in \mathrm{End}(TM)$ and is symmetric in the sense of $g(R_{X}Y, Z)=g(Y, R_{X}Z)$ for any tangent vector fields $Y$ and $Z$ on~$M$.

\vskip 6pt

From now on, assume that the normal Jacobi operator~${\bar R}_{N}$ of $M$ satisfies the new commuting condition given by
\begin{equation}\label{**}
{\bar R}_{N} R_{X} = R_{X} {\bar R}_{N}
\tag{**}
\end{equation}
where $R_{X}$ is the Jacobi operator with respect to $X \in \mathcal C =\{X \in TM \, | \, X \bot \xi \}$. Actually, from the Gauss equation~\eqref{eq: 2.1} of $M$, the Jacobi operator~$R_{X}$ with respect to $X \in \mathcal C$ is defined by
\begin{equation}\label{eq: 8.1}
\begin{split}
R_{X}Y & := R(Y, X)X \\
&\,\,  = g(X,X)Y - g(X,Y)X - 3g(X, \phi Y) \phi X + g(BX, X) BY \\
& \quad \  - g(BX, Y) BX + g( \phi BX, X) \phi BY - g(\phi BX, X) g(AN, Y) \xi  \\
& \quad \ - g(\phi BY, X) \phi BX + g(\phi BY, X) g(AN,  X) \xi \\
& \quad \ + g(SX, X) SY - g(SY, X) SX
\end{split}
\end{equation}
for any $X \in \mathcal C$ and $Y \in TM$. By using this equation, we obtain:
\begin{proposition}\label{proposition 8.1}
Let $M$ be a Hopf real hypersurface in $Q^{m}$, $m \geq 3$. If the normal Jacobi operator~${\bar R}_{N}$ of $M$ commutes with the Jacobi operator~$R_{X}$ for any vector field $X \in \mathcal C$, then the normal vector field~$N$ of $M$ is singular.
\end{proposition}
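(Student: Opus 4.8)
The plan is to mimic the strategy of Proposition~\ref{proposition 4.1}: evaluate the commuting identity \eqref{**} on a well-chosen vector field and extract an algebraic consequence that forces $\beta = g(A\xi,\xi)$ to be $0$ or $\pm 1$, i.e.\ forces $N$ to be $\mathfrak{A}$-isotropic or $\mathfrak{A}$-principal. The first step is to specialize $X$ in \eqref{eq: 8.1}. Since the hypothesis is that \eqref{**} holds for \emph{all} $X \in \mathcal C$, I am free to choose $X$ adapted to the $\mathfrak{A}$-structure. The natural choice, when $\beta \neq 0$, is $X = \phi A\xi / \|\phi A\xi\|$ (note $\phi A\xi \in \mathcal C$ and $\|\phi A\xi\|^2 = 1-\beta^2$), together with the auxiliary test vector $Y = \xi$; with $N$ Hopf one has the clean values $\RN\xi = 4\xi - 2\beta A\xi$ and $R_X\xi = R(\xi,X)X$, which by the Gauss equation \eqref{eq: 2.1} simplifies considerably because $g(SX,\xi) = 0$ for $X \in \mathcal C$. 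I would compute both $\RN R_X \xi$ and $R_X \RN \xi$ using \eqref{normal Jacobi operator of M}, \eqref{eq: 8.1}, $A^2 = I$, and the Hopf relations from Lemma~\ref{lemma Hopf}, and set them equal.

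The second step is to reduce the resulting identity to a scalar equation in $\beta$ and $\alpha$. Here I expect the key simplification to come from pairing the vector identity with $A\xi$ (or with $\xi$), using $g(\phi A\xi,\phi A\xi) = 1-\beta^2$, $g(A\xi,\xi) = \beta$, $g(AN,N) = -\beta$, and the Codazzi-derived relations; the Hopf identity \eqref{eq: 3.1} should let me replace the troublesome $S\phi S$ and $\phi S + S\phi$ terms. As in Case~II of Section~\ref{section 4}, I anticipate that after using \eqref{eq: 3.5} (i.e.\ $q(\xi)\beta = 2\alpha\beta$) the equation collapses to something of the shape $c\,\alpha\,\beta^2(1-\beta^2) = 0$ for a nonzero numerical constant $c$. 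Since $\beta = -\cos 2t$ with $t \in [0,\pi/4]$, this yields $\beta = 0$ (the $\mathfrak{A}$-isotropic case, $t = \pi/4$) or $\beta^2 = 1$ (the $\mathfrak{A}$-principal case, $t = 0$). The case $\alpha = 0$ is handled separately by invoking Remark~3.3 of \cite{LS2018} exactly as in Section~\ref{section 4}, which already gives singularity of $N$ when the Reeb flow is geodesic.

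The main obstacle will be the bookkeeping in the second step: unlike the structure Jacobi operator $R_\xi$, the generic Jacobi operator $R_X$ in \eqref{eq: 8.1} has many terms involving $BX = AX$ and $\phi BX$, and composing it with $\RN$ produces a long expression whose cross-terms must be organized by the type of $\mathfrak{A}$-component they carry. The trick to keep this manageable is to never expand the full endomorphism identity but only its evaluation on a single vector and only its components along the two- or three-dimensional span $\{\xi, A\xi, \phi A\xi\}$; orthogonality of these to the bulk of $\mathcal Q$ kills most terms. A secondary subtlety is ensuring the chosen $X = \phi A\xi/\|\phi A\xi\|$ is well defined, which fails precisely when $\beta^2 = 1$ — but that is the $\mathfrak{A}$-principal conclusion we want, so in that degenerate situation the proposition holds trivially and one simply assumes $\beta^2 \neq 1$ for the computation. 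Once the scalar equation is in hand, the conclusion that $N$ is singular is immediate.
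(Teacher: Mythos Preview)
Your plan has a genuine gap: the single choice $X = \phi A\xi/\|\phi A\xi\|$ together with the test vector $Y=\xi$ does not give enough information to force $\alpha\beta(1-\beta^{2})=0$. Concretely, with this $X$ one has $BX=\beta X$, $g(A\xi,X)=0$, $g(\phi A\xi,X)=\sqrt{1-\beta^{2}}$, and a direct computation from \eqref{eq: 8.1} yields
\[
R_{X}\xi = \alpha\,g(SX,X)\,\xi + 2\beta A\xi,\qquad
R_{X}A\xi = 4A\xi - 2\beta\xi + g(SX,X)\,SA\xi - g(SA\xi,X)\,SX.
\]
Feeding these into $\bar R_{N}R_{X}\xi = R_{X}\bar R_{N}\xi$ (using $\bar R_{N}\xi=4\xi-2\beta A\xi$ and $\bar R_{N}A\xi=2\beta\xi$) and dividing by $2\beta$ gives, after cancellation,
\[
g(SX,X)\bigl(SA\xi - \alpha A\xi\bigr) = g(SA\xi,X)\,SX.
\]
This relation still contains the unknown quantities $g(SX,X)$, $SA\xi$, and $SX$; pairing it with $\xi$ or $\phi A\xi$ gives tautologies, and pairing with $A\xi$ gives only the single scalar constraint $g(SX,X)\bigl(g(SA\xi,A\xi)-\alpha\bigr)=g(SA\xi,X)^{2}$. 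Nothing here forces $\beta^{2}\in\{0,1\}$, and neither \eqref{eq: 3.1} nor \eqref{eq: 3.5} eliminates the remaining freedom, since $S\phi A\xi$ is not a priori an eigenvector of $S$.

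The paper's argument avoids this obstruction by exploiting that \eqref{**} holds for \emph{every} $X\in\mathcal C$. It evaluates the symmetric identity $g(R_{X}A\xi,\bar R_{N}\xi)=g(\bar R_{N}A\xi,R_{X}\xi)$, obtains a quadratic form in $X$, polarizes to a bilinear identity on $\mathcal C$, and from this extracts $\alpha\beta(1-\beta^{2})\,g(SA\xi,X)=0$ for all $X\in\mathcal C$. Only in the residual case $SA\xi=\alpha\beta\xi$ (which, via \eqref{eq: 3.1}, forces $S\phi A\xi=-(2\beta^{2}/\alpha)\phi A\xi$) does a \emph{second} polarized instance of \eqref{**}, now with the pair $(\xi,\phi A\xi)$, yield the contradiction $\beta^{2}=1$. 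Your analogy with Case~II of Section~\ref{section 4} is apt only after $SA\xi=\alpha\beta\xi$ has been established; reaching that point requires the full strength of the hypothesis over all $X$, not a single well-chosen one.
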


\begin{proof}
Since ${\bar R}_{N}$ and $R_{X}$ are symmetric, the commuting condition~\eqref{**} yields that
\begin{equation} \label{e: 8.2}
g(R_{X} (A\xi),{\bar R}_{N}\xi) = g({\bar R}_{N}(A\xi), R_{X}\xi)
\end{equation}
for any $X \in \mathcal C$.

\vskip 3pt

\noindent On the other hand, from \eqref{normal Jacobi operator of M} we get the following:
$$
{\bar R}_{N}\xi = 4 \xi - 2 \beta A\xi \ \  \mathrm{and}\ \  {\bar R}_{N}(A\xi)=2 \beta \xi.
$$
In addition, by using \eqref{eq: 8.1}, the formula~\eqref{e: 8.2} leads to
\begin{equation*}
\begin{split}
& 2 g(BX,X) + \alpha \beta g(SX, X) + 2 \beta g(A\xi, X) g(A\xi, X)- 2 \beta g(AN, X) g(AN, X)   \\
& \quad  - 2 \beta^{2} g(BX, X) - \beta g(SX, X) g(SA\xi, A\xi) + \beta g(SA\xi, X) g(SA\xi, X) =0
\end{split}
\end{equation*}
for any $X \in \mathcal C$. This equality also holds for $X+Y$ where $Y \in \mathcal C$. Then it induces
\begin{equation}\label{e: 8.3}
\begin{split}
& 2 g(BX,Y) + \alpha \beta g(SX, Y) + 2 \beta g(A\xi, X) g(A\xi, Y)- 2 \beta g(AN, X) g(AN, Y)   \\
& \quad  - 2 \beta^{2} g(BX, Y) - \beta g(SX, Y) g(SA\xi, A\xi) + \beta g(SA\xi, X) g(SA\xi, Y) =0
\end{split}
\end{equation}
for any $X$, $Y \in \mathcal C$. Now we may put
\begin{equation}\label{e 8.4}
\begin{split}
W & := 2 BX + \alpha \beta SX + 2 \beta g(A\xi, X) A\xi + 2 \beta g(AN, X) \phi A\xi   \\
& \quad  - 2 \beta^{2} BX - \beta g(SA\xi, A\xi) SX  + \beta g(SA\xi, X)SA\xi.
\end{split}
\end{equation}
Then by using of \eqref{e: 8.3} and \eqref{e 8.4}, the vector field~$W \in TM$ can be given by
\begin{equation}\label{e 8.5}
\begin{split}
W & = \sum_{i=1}^{2m-1} g(W, e_{i}) e_{i} = \sum_{i=1}^{2m-2} g(W, e_{i}) e_{i} + g(W, \xi)\xi \\
  & \underset{\eqref{e: 8.3}}{=} g(W, \xi)\xi  \underset{\eqref{e 8.4}}{=} \Big \{2 g(A\xi, X) + \alpha \beta^{2} g(S A\xi, X) \Big \}\xi
\end{split}
\end{equation}
for a basis $\{e_{1}, e_{2}, \cdots, e_{2m-2}, e_{2m-1}=\xi \}$ of $TM$.

\vskip 3pt

\noindent On the other hand, taking the inner product of \eqref{e 8.4} with the vector field~$A\xi$, we have
\begin{equation*}
g(W, A\xi) = \alpha \beta g(SX, A\xi) + 2 \beta g(A\xi, X), \quad \forall X \in \mathcal C,
\end{equation*}
together with $BA\xi = \xi$. Next let us take the inner product of \eqref{e 8.5} with $A\xi$, it follows $g(W, A\xi)= 2 \beta g(A\xi, X) + \alpha \beta^{3} g(S A\xi, X)$. Combining these two equations, we get
\begin{equation}\label{e: 8.4}
\alpha \beta (1-\beta)(1+\beta) g(SA\xi, X) = 0
\end{equation}
for any $X \in \mathcal C$.

\vskip 3pt

As mentioned before, if the Reeb curvature function~$\alpha = g(S\xi, \xi)$ is vanishing, then the normal vector field~$N$ is singular. Moreover, since $\beta = g(A\xi, \xi)= -\cot 2t$ where $t \in [0, \frac{\pi}{4}]$, the normal vector field~$N$ is singular if $\beta=0$ or $\beta=-1$. That is, when $\beta=0$ (resp. $\beta = -1$), the normal vector field~$N$ is $\mathfrak A$-isotropic (resp. $\mathfrak A$-principal). The other case like $\beta=1$ can not be happen.

\vskip 3pt

Finally, let us consider the remained case, $g(SA\xi, X) =0$ for any $X \in \mathcal C$. In other words, it implies that
\begin{equation}\label{e: 8.5}
SA \xi = \sum_{i=1}^{2m-2}g(SA\xi, e_{i}) e_{i} + g(SA\xi, \xi)\xi = g(SA\xi, \xi)\xi = \alpha \beta \xi
\end{equation}
with $\alpha \beta (1-\beta^{2}) \neq 0$ on $M$. By putting $X = A \xi $ in \eqref{eq: 3.1} and \eqref{e: 8.5}, we get
\begin{equation}\label{e: 8.6}
S \phi A \xi = \sigma \phi A \xi \quad \mathrm{where}\  \sigma = -{2 \beta^{2}} / {\alpha}.
\end{equation}

On the other hand, from ${\bar R}_{N}(\phi A\xi)=0$ and \eqref{eq: 8.1}, the commuting condition $g(R_{X} (\phi A \xi), {\bar R}_{N} \xi) = g(R_{X} \xi, {\bar R}_{N} (\phi A \xi))$ becomes
\begin{equation*}
\begin{split}
& 4 \beta g(X, AN) g(A\xi, X) - 4 \beta^{2} g(\phi BX, X) + 4 g(\phi BX, X) \\
& \quad -  2 \beta^{2} g(A\xi, X) g(BX, AN) - 2 \beta^{3} g(A\xi, X) g(AN, X) \\
& \quad - 2 \beta g(SX, X) g(S\phi A\xi, A\xi) + 2 \beta g(S\phi A\xi, X) g(SX, A\xi) =0
\end{split}
\end{equation*}
for $X \in \mathcal C$. By using the polarization of the inner product, we get
\begin{equation}\label{e: 8.7}
\begin{split}
& 4 \beta g(X, AN) g(A\xi, Y) + 4 \beta g(Y, AN) g(A\xi, X) - 4 \beta^{2}g(\phi BX, Y)  \\
& \quad - 4 \beta^{2} g(\phi BY, X) + 4 g(\phi BX, Y) + 4 g(\phi BY, X) \\
& \quad - 2 \beta^{2} g(A\xi, Y) g(BX, AN)-  2 \beta^{2} g(A\xi, X) g(BY, AN) \\
& \quad - 2 \beta^{3} g(A\xi, Y) g(AN, X) - 2 \beta^{3} g(A\xi, X) g(AN, Y) \\
& \quad - 2 \beta g(SX, Y) g(S\phi A\xi, A\xi) - 2 \beta g(SY, X) g(S\phi A\xi, A\xi)\\
& \quad + 2 \beta g(S\phi A\xi, X) g(SY, A\xi) + 2 \beta g(S\phi A\xi, Y) g(SX, A\xi)=0
\end{split}
\end{equation}
for $X$, $Y \in \mathcal C$. By the way, from the property of $JA=-AJ$ we have $\phi BZ - g(AN,Z) \xi = - B\phi Z + \eta(Z) \phi A \xi$ for any $Z \in TM$. Hence $B \phi A\xi = \beta \phi A \xi$, because $\phi BA\xi = \phi \xi$. From this and putting $Y=A\xi \in \mathcal C$ in \eqref{e: 8.7} it follows
\begin{equation*}
\beta \big(2-2\beta^{2}+\beta^{4}\big) g(AN, X) =0 \quad \mathrm{for}\ X \in \mathcal C,
\end{equation*}
together with \eqref{e: 8.5} and \eqref{e: 8.6}. Since $\beta (2-2\beta^{2}+\beta^{4})\neq 0$, it implies that
\begin{equation}\label{e: 8.8}
g(AN, X)=-g(\phi A\xi, X)=0
\end{equation}
for all $X \in \mathcal C$. Then it follows that
\begin{equation*}
\begin{split}
\phi A \xi & = \sum_{i=1}^{2m-2} g(\phi A \xi, e_{i})e_{i} + g(\phi A\xi, \xi)\xi =0,
\end{split}
\end{equation*}
which implies $A \xi = \beta \xi$. This means that $\beta^{2}=1$, which gives us a contradiction.

\vskip 3pt

Making use of these facts, we have only the case that $\alpha \beta (1+\beta)=0$. So we conclude that the normal vector field~$N$ of $M$ is singular. So we are able to give a complete proof of Proposition~\ref{proposition 8.1}.

\end{proof}

For the latter part of the proof of Theorem~$\rm 3$ we can divide into two cases that the unit normal vector field~$N$ is either $\mathfrak A$-isotropic or $\mathfrak A$-principal. Thus as a first part we consider the case of $\mathfrak A$-isotropic as follows.
\begin{proposition}\label{proposition 8.2}
There does not exist a Hopf real hypersurfce with $\mathfrak A$-isotropic normal vector field~$N$ and the commuting condition~\eqref{**} in $Q^{m}$, $m \geq 3$.
\end{proposition}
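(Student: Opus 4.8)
The plan is to assume, for contradiction, that $M$ is a Hopf real hypersurface with $\mathfrak A$-isotropic unit normal $N$ satisfying the commuting condition~\eqref{**}. Since $N$ is $\mathfrak A$-isotropic we have $\beta = g(A\xi,\xi)=0$, $g(AN,N)=0$ and $g(\xi,AN)=0$, so both $A\xi$ and $AN=-\phi A\xi$ are unit vector fields tangent to $M$ and orthogonal to $\xi$; moreover by Lemma~\ref{lemma 3.5} we already know $SA\xi=0$ and $SAN=0$. In this setting the normal Jacobi operator simplifies, from \eqref{normal Jacobi operator of M}, to ${\bar R}_{N}Y = Y + 3\eta(Y)\xi - g(AN,Y)AN - g(A\xi,Y)A\xi$, so that ${\bar R}_{N}\xi = 4\xi$ while ${\bar R}_{N}(A\xi)=0$ and ${\bar R}_{N}(AN)=0$. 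These are exactly the facts that collapse the right-hand side of~\eqref{e: 8.2} and its analogues.

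The main computation is to exploit the commuting relation~\eqref{**} for well-chosen $X\in\mathcal C$. First I would revisit the identity~\eqref{e: 8.3}: with $\beta=0$ it reduces to $2g(BX,Y)=0$ for all $X,Y\in\mathcal C$, which looks like it forces $BX=0$ on $\mathcal C$ — but one must be careful that $BX$ need not lie in $\mathcal C$, so more honestly it gives $g(BX,Y)=0$ for $X,Y\in\mathcal C$, i.e. the $\mathcal C$-component of $BX$ vanishes for every $X\in\mathcal C$. Combined with $g(BX,\xi)=g(AX,\xi)=g(X,A\xi)$ and $g(BX,N)=0$, this says $BX = g(X,A\xi)\xi$ for all $X\in\mathcal C$, which already severely constrains the conjugation $A$ restricted to $\mathcal C$: applying this to $X$ orthogonal to both $A\xi$ and $AN$ forces $AX$ to be normal-plus-$N$ components only, contradicting that $A$ is an isometry of the $(2m{-}2)$-dimensional space $\mathcal C$ as soon as $m\ge 3$. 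So the quickest route is: derive $BX=g(X,A\xi)\xi$ on $\mathcal C$ from \eqref{e: 8.3} with $\beta=0$, then pick a unit $X_0\in\mathcal Q=\mathcal C\ominus\operatorname{Span}\{A\xi,AN\}$ (nonempty since $\dim_{\mathbb R}\mathcal C=2m-2\ge 4$), and observe $g(AX_0,A\xi)=g(X_0,\xi)=0$ and $g(AX_0,AN)=g(X_0,N)=0$ while $\|AX_0\|=1$, yet $BX_0=0$ forces $AX_0\perp TM$, hence $AX_0\in\mathbb R N$, whence $1=\|AX_0\|=|g(AX_0,N)|=|g(X_0,AN)|=0$, a contradiction.

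I expect the main obstacle to be the bookkeeping around whether $BX$ lies in $\mathcal C$: one has to keep track of the $\xi$- and $N$-components of $AX$ carefully, since \eqref{e: 8.3} only controls the $\mathcal C\times\mathcal C$ pairing, and the contradiction is extracted precisely from the mismatch between "$A$ is a norm-preserving complex conjugation on $T_{[z]}Q^m$" and "$B$ kills the complex subbundle $\mathcal C$". If this direct argument should fail to close (for instance if \eqref{e: 8.3} turns out to be vacuous when $\beta=0$ because every term carried a factor of $\beta$), the fallback is to instead use the commuting condition applied to $\xi$ itself together with \eqref{eq: 8.1}, computing $R_{X}\xi$ and $R_{X}(A\xi)$ explicitly for $X\in\mathcal Q$ and reading off a relation forcing $\alpha$ to vanish — which, as remarked after Proposition~\ref{proposition 5.1}, would then push us into a separate (and by Proposition~\ref{proposition 5.1} already understood) regime; but I anticipate the $\beta=0$ specialization of \eqref{e: 8.3} is genuinely $2g(BX,Y)=0$ and the short argument above suffices.
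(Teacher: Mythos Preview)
Your argument is correct. Setting $\beta=0$ in \eqref{e: 8.3} does leave the single surviving term $2g(BX,Y)=0$ for all $X,Y\in\mathcal C$ (your worry about every term carrying a $\beta$ does not materialize), so $BX=g(X,A\xi)\xi$ on $\mathcal C$; then for a unit $X_0\in\mathcal Q$ one gets $AX_0=BX_0+g(X_0,AN)N=0$, contradicting $\|AX_0\|=1$.

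The paper's own proof reaches the same key identity $BX=g(A\xi,X)\xi$ on $\mathcal C$ but by a different instantiation of \eqref{**}: instead of recycling \eqref{e: 8.3} (which came from pairing the commuting relation at $A\xi$ against $\xi$), the paper pairs $R_X\xi$ against vectors $Z\in\mathcal Q$, using ${\bar R}_N\xi=4\xi$ and ${\bar R}_NZ=Z$ to obtain $g(R_X\xi,Z)=0$, then polarizes to \eqref{eq: 8.4}--\eqref{eq: 8.5} and specializes $Y=A\xi$. The final contradiction is also packaged differently: the paper computes $B^2X$ two ways (once from $BX=g(A\xi,X)\xi$, once from the general formula $B^2Y=Y+g(AN,Y)\phi A\xi$) to force $X=g(A\xi,X)A\xi+g(AN,X)AN$ for all $X\in\mathcal C$, i.e.\ $\dim_{\mathbb R}\mathcal C=2$. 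Your route is shorter because it reuses machinery already established in Proposition~\ref{proposition 8.1}, while the paper's argument is self-contained within this proposition; both are valid and essentially equivalent in content.
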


\begin{proof}
Suppose that the normal vector field~$N$ of $M$ is $\mathfrak A$-isotropic, that is, $\beta = 0$. Let $\mathcal Q$ is a distribution of $TM$ defined by
$$
\mathcal Q_{[z]} = \mathcal C_{[z]} - [A\xi, AN]_{[z]} = \{Z \in T_{[z]}M \, | \, Z \bot \xi, A\xi, AN \} \quad \mathrm{at}\ [z] \in M.
$$
Since ${\bar R}_{N}\xi = 4 \xi$ and ${\bar R}_{N} Z = Z$ for any $Z \in \mathcal Q$, the commuting condition~\eqref{**}, $g({\bar R}_{N}R_{X} \xi, Z) = g(R_{X} {\bar R}_{N} \xi, Z)$, gives us $g( R_{X}\xi, Z ) = 0$ for all $Z \in \mathcal Q$. Thus, by using \eqref{eq: 8.1} we get
\begin{equation*}
-g(A\xi,X) g(BX, Z) + g(AN, X) g(\phi BX, Z) =0.
\end{equation*}
By virtue of the linearity in terms of the inner product, it is equal to
\begin{equation}\label{eq: 8.4}
\begin{split}
&- g(A\xi, Y) g(BX, Z) - g(A\xi, X) g(BY, Z) \\
& \quad \ \  + g(AN, Y) g(\phi BX, Z) + g(AN, X) g(\phi BY, Z)=0
\end{split}
\end{equation}
for any $X$, $Y \in \mathcal C$ and $Z \in \mathcal Q$. Then for any basis $\{e_{1}, e_{2}, \cdots, e_{2m-4}, e_{2m-3}=A\xi, e_{2m-2}=AN, e_{2m-1}=\xi \}$ of $TM$, the equation~\eqref{eq: 8.4} yields that
\begin{equation*}
\begin{split}
{\widetilde W}&= \sum_{i=1}^{2m-1} g(W, e_{i}) e_{i} \\
  & = \sum_{i=1}^{2m-2} g({\widetilde W}, e_{i}) e_{i} + g({\widetilde W}, A\xi)A\xi + g({\widetilde W}, AN) AN + g(W, \xi)\xi \\
  & = g({\widetilde W}, A\xi)A\xi + g({\widetilde W}, AN) AN + g(W, \xi)\xi
\end{split}
\end{equation*}
where ${\widetilde W}= - g(A\xi, Y) BX - g(A\xi, X) BY + g(AN, Y) \phi BX + g(AN, X) \phi BY$. Since $BAN= - B \phi A \xi =0$ and $BA\xi = \xi$, it follows
\begin{equation}\label{eq: 8.5}
\begin{split}
2 g(A\xi, X) g(A\xi, Y)\xi & =  g(A\xi, Y) BX + g(A\xi, X) BY  \\
 & \quad - g(AN, Y) \phi BX - g(AN, X) \phi BY
\end{split}
\end{equation}
for any $X$, $Y \in \mathcal C$. Since $\beta = g(A\xi, \xi)=0$, we know that $A\xi \in \mathcal C$. Hence substituting $Y=A\xi$ in \eqref{eq: 8.5}, it leads to $BX = g(A\xi, X) \xi$ for $X \in \mathcal C$. Therefore, we obtain
\begin{equation}\label{eq: 8.6}
B^{2}X = g(A\xi, X) A\xi
\end{equation}
because of $A\xi = B\xi$. In general, from the properties of $AJ = -JA$ and $A^{2}=I$, we get
$$
AN = AJ\xi = -JA\xi = -\phi A \xi - g(A\xi, \xi) N,
$$
and
$$
B^{2}Y = Y + g(AN, Y) \phi A \xi
$$
for any $Y \in TM$. From this, \eqref{eq: 8.6} becomes
\begin{equation*}
X = g(A\xi, X) A\xi + g(AN, X) AN \quad \mathrm{for \  any}\ X \in \mathcal C =[A\xi, AN] \oplus \mathcal Q,
\end{equation*}
which implies $\mathrm{dim}_{\mathbb R}\,{\mathcal C} = 2$. It makes a contradiction for $m \geq 3$, which gives a complete proof of Proposition~\ref{proposition 8.2}.
\end{proof}

By virtue of Propositions~\ref{proposition 8.1} and \ref{proposition 8.2}, we can assert that {\it the normal vector field~$N$ of $M$ satisfying the condition of \eqref{**} in $Q^{m}$ must be $\mathfrak A$-principal}. Accordingly, by using Theorem~$\rm 2$, we arrive at the conclusion that
\begin{quote}
{\it Let $M$ be a Hopf real hypersurface in $Q^{m}$, $m \geq 3$. If the normal Jacobi operator~${\bar R}_{N}$ commutes with the Jacobi operator~$R_{X}$ with respect to $X \in \mathcal C$, then $M$ is locally congruent to the model space of type~$(\mathcal T_{B})$.}
\end{quote}

\vskip 6pt

Now, in order to prove our Theorem~$\rm 3$, it remains only to check whether the model space of $(\mathcal T_{B})$ satisfy the commuting condition~\eqref{**}. According to Remark~5.1. in \cite{Suh2017}  we obtain:
\begin{proposiotion A}
Let $(\mathcal T_{B})$ be the tube of radius $0 < r < \frac{\pi}{2 \sqrt{2}}$ around the $m$-dimensional sphere $S^{m}$ in $Q^{m}$. Then the following holds:
\begin{enumerate}[\rm (i)]
\item {$(\mathcal T_{B})$ is a Hopf hypersurface.}
\item {The normal bundle of $(\mathcal T_{B})$ consists of $\mathfrak A$-principal singular.}
\item {$(\mathcal T_{B})$ has three distinct constant principal curvatures.
\begin{center}
\begin{tabular}{l|l|l}
\hline
\mbox{\rm principal curvature} & \mbox{\rm eigenspace}  & \mbox{\rm multiplicity}  \\
\hline
$\alpha = -\sqrt{2}\cot(\sqrt{2}r)$ & $T_{\alpha}=\mathrm{Span}\{\xi\}$ & $1$ \\
$\lambda = \sqrt{2} \tan(\sqrt{2}r)$ & $T_{\lambda}=V(A) \cap {\mathcal C}=\{X \in \mathcal C\,|\, AX=X\}$ & $m-1$\\
$\mu = 0$ & $T_{\mu} = JV(A) \cap {\mathcal C}=\{X \in \mathcal C\,|\, AX=-X\}$ & $m-1$ \\
\hline
\end{tabular}
\end{center}}
\item {$S \phi + \phi S = 2 \delta \phi $, $\delta =-\frac{1}{\alpha}\neq 0$ (contact hypersurface). }
\end{enumerate}
\end{proposiotion A}

\noindent By virtue of $\rm (ii)$ in Proposition~$\rm A$, we know that $AN=N$. So it follows that $AY$ is a tangent vector field of $(\mathcal T_{B})$ for any $Y \in T (\mathcal T_{B})$. Thus from \eqref{eq: 2.1} and \eqref{normal Jacobi operator of M} the Jacobi operators with respect to $N$ and~$X$, respectively, are given by
\begin{equation*}
{\bar R}_{N}Y = Y + 2 \eta(Y) \xi + AY
\end{equation*}
and
\begin{equation*}
\begin{split}
R_{X}Y & = g(X,X)Y - g(X,Y)X + 3g(\phi X, Y) \phi X + g(AX, X)AY \\
& \quad \  - g(AX, Y) AX  + g(\phi AX, X) \phi AY + g(A \phi X, Y) \phi AX \\
& \quad \  + g(SX, X) SY - g(SX, Y) SX
\end{split}
\end{equation*}
for all $X$, $Y \in T(\mathcal T_{B})$. Then we see that all tangent vector fields are principal by~${\bar R}_{N}$, that is,
\begin{equation}
{\bar R}_{N}Y = \left\{ \begin{array}{cl}
2 Y & \mathrm{if}\  Y \in T_{\alpha}\\
2 Y & \mathrm{if}\  Y \in T_{\lambda}\\
0 & \mathrm{if} \ Y \in T_{\mu}
\end{array}\right.
\end{equation}
where $T(\mathcal T_{B}) = T_{\alpha} \oplus T_{\lambda} \oplus T_{\mu}$. On the other hand, the Jacobi operator with respect to $X \in T(\mathcal T_{B})$ can be expressed by the following three cases.
\begin{enumerate}
\item[\bf Case~1.] {$X \in T_{\alpha}$ (that is, $X=\xi$)}
\begin{equation}
\begin{split}
R_{\xi}Y & = Y - 2\eta(Y) \xi -AY +\alpha SY - \alpha^{2} \eta(Y)\xi \\
& = \left\{ \begin{array}{ll}
0 & \mathrm{if}\  Y \in T_{\alpha}\\
\alpha \lambda Y & \mathrm{if}\  Y \in T_{\lambda}\\
2Y & \mathrm{if} \ Y \in T_{\mu}
\end{array}\right.
\end{split}
\end{equation}
\item[\bf Case~2.] {$X \in T_{\lambda}=\{X \in \mathcal C\, |\, AX = X \}$}
\begin{equation}
\begin{split}
R_{X}Y & = g(X, X) Y - 2g(X, Y) X + 3g(\phi X, Y) \phi X + g(X, X) AY \\
& \quad \  + g(A \phi X, Y) \phi X + \lambda g(X,X) SY - \lambda^{2} g(X,Y) X \\
& = \left\{ \begin{array}{ll}
\alpha \lambda g(X,X) Y & \mathrm{if}\  Y \in T_{\alpha}\\
(\lambda^{2}+2) g(X,X)Y - (\lambda^{2}+2)g(X,Y) X & \mathrm{if}\  Y \in T_{\lambda}\\
2g(\phi X, Y) \phi X & \mathrm{if} \ Y \in T_{\mu}
\end{array}\right.
\end{split}
\end{equation}
\item[\bf Case~3.] {$X \in T_{\mu}=\{X \in \mathcal C\, |\, AX = -X \}$}
\begin{equation}
\begin{split}
R_{X}Y & = g(X,X) Y - 2g(X,Y) X +3g(\phi X, Y) \phi X -g(X,X) AY \\
& \quad \  -g(A \phi X, Y) \phi X \\
& = \left\{ \begin{array}{ll}
2g(X,X)Y & \mathrm{if}\  Y \in T_{\alpha}\\
2g(\phi X, Y) \phi X & \mathrm{if}\  Y \in T_{\lambda}\\
2g(X,X) Y - 2g(X, Y) X & \mathrm{if} \ Y \in T_{\mu}
\end{array}\right.
\end{split}
\end{equation}
\end{enumerate}
where we have used that $\phi Z \in T_{\mu}$ (resp. $\phi Z \in T_{\lambda}$), provided that $Z \in T_{\lambda}$ (resp. $Z \in T_{\mu}$).
From these equations we consequently obtain:
\begin{equation*}
\begin{split}
 {\bar R}_{N}R_{X}Y & =  \left\{ \begin{array}{ll}
0 & \mathrm{if}\  X \in T_{\alpha}, \ Y \in T_{\alpha}\\
2\alpha \lambda Y & \mathrm{if}\ X \in T_{\alpha}, \  Y \in T_{\lambda}\\
0 & \mathrm{if} \ \mathrm{if}\ X \in T_{\alpha}, \  Y \in T_{\mu} \\
2\alpha \lambda g(X,X) Y & \mathrm{if}\  X \in T_{\lambda},\ Y \in T_{\alpha}\\
2(\lambda^{2}+2) g(X,X) Y - 2(\lambda^{2}+2) g(X,Y) X & \mathrm{if}\ X \in T_{\lambda}, \  Y \in T_{\lambda}\\
0 & \mathrm{if} \ \mathrm{if}\ X \in T_{\lambda}, \  Y \in T_{\mu} \\
4g(X,X) Y & \mathrm{if}\  X \in T_{\mu},\ Y \in T_{\alpha}\\
4g(\phi X, Y) \phi X & \mathrm{if}\ X \in T_{\mu}, \  Y \in T_{\lambda}\\
0 & \mathrm{if} \ \mathrm{if}\ X \in T_{\mu}, \  Y \in T_{\mu} \\
\end{array}\right. \\
& =R_{X}{\bar R}_{N}Y
\end{split}
\end{equation*}
It implies that the model space of $(\mathcal T_{B})$ satisfies the commuting condition~\eqref{**} between the normal Jacobi operator~${\bar R}_{N}$ and the Jacobi operator~$R_{X}$ for $X \in \mathcal C$. Moreover, in the above calculations it can be easily checked that ${\bar R}_{N}R_{\xi} = R_{\xi} {\bar R}_{N}$ for the tube of $\rm (\mathcal T_{B})$.

\vskip 17pt

\begin{acknowledgements}
\rm The first author was supported by grant Proj. No. NRF-2019-R1I1A1A01-050300 and the second author by NRF-2018-R1D1A1B05-040381 from National Research Foundation of Korea.
\end{acknowledgements}

\vskip 17pt

%%%%%%%%%%%%%%%%%%%%%%%%% Reference %%%%%%%%%%%%%%%%%%%%%%%%%%%%%%%%%%%%

\end{document}